\newcommand{\Kr}{\operatorname{Kr}}
\newcommand{\KR}{\operatorname{Kr_h}}
\newcommand{\SStar}{\operatorname{SStar}}
\newcommand{\Over}{\operatorname{Overr}}
\newcommand{\Zar}{\operatorname{Zar}}
\newcommand{\Spec}{\operatorname{Spec}}
\newcommand{\fa}{\frak{a}}
\newcommand{\fp}{\frak{p}}
\newtheorem{theorem}{Theorem}
\newtheorem{proposition}[theorem]{Proposition}
\newtheorem{example}{Example}
\newtheorem{remark}{Remark}
\newtheorem{lemma}{Lemma}
\newtheorem{corollary}{Corollary}
\begin{document}

\bibliographystyle{amsplain}

\date{}

\author{Parviz Sahandi}

\address{Department of Pure Mathematics, Faculty of Mathematical Sciences, University of Tabriz, Tabriz,
Iran}
\email{sahandi@ipm.ir}

\keywords{Semistar operation, homogeneous preserving semistar operation, Kronecker function ring, graded domain, gr-valuation domain, spectral space}

\subjclass[2010]{Primary 13A15, 13G05, 13A02}

\title[The space of homogeneous preserving semistar operations]{The space of homogeneous preserving semistar operations on graded domains}

\begin{abstract} Let $R=\bigoplus_{\alpha\in\Gamma}R_{\alpha}$ be a graded integral domain. In this paper we study the space of homogeneous preserving semistar operations on $R$. We show if $\star$ is a homogeneous preserving semistar operation on $R$, then $\star_a$ is also homogeneous preserving. Let $\KR(R,b)$ be the homogeneous Kronecker function ring of $R$ with respect to the $b$-operation. It is shown that the set of valuation overrings of $\KR(R,b)$, endowed with the Zariski topology, is homeomorphism to $\Zar_h(R)$, the set of gr-valuation overrings of $R$, endowed with the Zariski topology. We also show that the set $\SStar_{f,hp}(R)$ of finite type, homogeneous preserving semistar operations on $R$, endowed with the Zariski topology, is a spectral space.
\end{abstract}

\maketitle

\section{Introduction}

Star operations are introduced by Krull in \cite{k36}, that is ``special'' closure operations on nonzero fractional ideals of an integral domain $R$ with quotient field $K$. Star operations have been proven to be an essential tool in \emph{multiplicative ideal theory}, allowing one to study different classes of integral domains. In 1994, Okabe and Matsuda \cite{OM} introduced the concept of \emph{semistar operations} to extend the notion of classical \emph{star operations} as described in \cite[Section 32]{g72}. Semistar operations, thanks to a higher flexibility than star operations, permit a finer study and new classifications of special classes of integral domains (cf. also \cite{fh00, FJS, fl03, FL1,FL2,FL3,aa90,s18}).

The purpose of this paper is to study the space of homogeneous preserving semistar operations on graded integral domains $R=\bigoplus_{\alpha\in\Gamma}R_{\alpha}$ graded by an arbitrary grading torsionless monoid $\Gamma$. In Section 2, we give preliminary definitions and examples of homogeneous preserving semistar operations on $R$. In Section 3, we use the homogeneous Kronecker function rings, to show that if $\star$ is a homogeneous preserving semistar operation on $R$, then $\star_a$ is homogeneous preserving. Then we able to show that, the space of gr-valuation overrings of $R$, denoted by $\Zar_h(R)$, endowed with the Zariski topology is a spectral space. In Section 4, we show that the space $\SStar_{f,hp}(R)$ of finite type homogeneous preserving semistar operations endowed with Zariski topology is a spectral space.


Let $\Gamma$ be a nonzero torsionless commutative cancellative monoid (written
additively), and $\langle \Gamma \rangle = \{a - b \mid a,b \in \Gamma\}$ be the
quotient group of $\Gamma$; so $\langle \Gamma \rangle$ is a torsionfree abelian group.
It is known that a cancellative monoid $\Gamma$ is torsionless
if and only if $\Gamma$ can be given a total order compatible with the monoid
operation \cite[page 123]{n68}. By a $(\Gamma$-)graded integral domain  $R =\bigoplus_{\alpha \in \Gamma}R_{\alpha}$,
we mean an integral domain graded by $\Gamma$.
That is, each nonzero $x \in R_{\alpha}$ has degree $\alpha$, i.e., deg$(x) = \alpha$,  and
deg$(0) = 0$. Thus, each nonzero $f \in R$ can be written uniquely as $f = x_{\alpha_1} + \dots + x_{\alpha_n}$ with
deg$(x_{\alpha_i}) = \alpha_i$ and $\alpha_1 < \cdots < \alpha_n$.

An element $x \in R_{\alpha}$ for $\alpha \in \Gamma$ is said to be {\em homogeneous}. Let $H$ be the set of nonzero homogeneous elements of $R$. Then $H$ is a saturated multiplicative set of $R$, $H = \bigcup_{\alpha \in \Gamma}(R_{\alpha} \setminus \{0\})$, and $R_H$ is a $\langle \Gamma \rangle$-graded integral domain with $(R_H)_{\alpha}:=\{\frac{a}{b}\mid a\in R_{\beta}, 0\neq b\in R_{\gamma},$ and $\beta-\gamma=\alpha\}$, whose nonzero homogeneous elements are units. We say that $R_H$ is the {\em homogeneous quotient field} of $R$. An integral ideal $I$ of $R$ is said to be homogeneous if $I=\bigoplus_{\alpha\in\Gamma}(I\cap R_{\alpha})$. A fractional ideal $I$ of $R$ is \emph{homogeneous} if $sI$ is an integral homogeneous ideal of $R$ for some $s\in H$ (thus $I\subseteq R_H$). A homogeneous ideal of $R$ is called a {\em homogeneous maximal ideal} if it is maximal among proper homogeneous ideals of $R$. An overring $A$ of $R$ is a {\em homogeneous overring} of $R$ if $A = \bigoplus_{\alpha \in \langle \Gamma \rangle}(A \cap (R_H)_{\alpha})$; so $A$ is a $\langle \Gamma \rangle$-graded integral domain such that $R \subseteq A \subseteq R_H$. Clearly, if $\Lambda = \{\alpha \in \langle \Gamma \rangle \mid A \cap (R_H)_{\alpha} \neq \{0\}\}$, then $\Lambda$ is a torsionless commutative cancellative monoid such that $\Gamma \subseteq \Lambda \subseteq \langle \Gamma \rangle$ and $A = \bigoplus_{\alpha \in \Lambda}(A \cap (R_H)_{\alpha})$.

We say that $R$ is a graded-valuation domain (gr-valuation domain) if either $x\in R$ or $x^{-1}\in R$ for every nonzero homogeneous element $x\in R_H$. The notion of gr-valuation domains for $\mathbb{Z}$-graded integral domains was introduced by Johnson \cite{j79}. He then studied $\Gamma$-graded integral domains
in \cite{j83}, where he showed that the integral closure of $R$ is the intersection of all the gr-valuation (homogeneous) overrings of $R$ \cite[Theorem 2.10]{j83}. The gr-valuation domain $R$ has a unique homogeneous maximal ideal $M$, $R_M$ is a valuation domain and $R_M\cap R_H=R$ (see \cite[Lemma 4.3]{s14} and \cite[Theorem 2.3]{aac17}).

For more on graded integral domains and their divisibility properties, see \cite{AA2}, \cite{n68}.

\section{Semistar operations on graded integral domains}

Let $\Gamma$ be a (nonzero) torsionless commutative cancellative monoid, $R=\bigoplus_{\alpha\in\Gamma}R_{\alpha}$ be an integral domain graded by $\Gamma$, and $H$ be the set of nonzero homogeneous elements of $R$ with quotient field $K$.

Let $\overline{\mathcal{F}}(R)$ denote the set of all nonzero $R$-submodules of $K$, $\mathcal{F}(R)$ be the set of all nonzero fractional ideals of $R$, and $f(R)$ be the set of all nonzero finitely generated fractional ideals of $R$. Obviously, $f(R)\subseteq\mathcal{F}(R)\subseteq\overline{\mathcal{F}}(R)$. As
in \cite{OM}, a {\it semistar operation on} $R$ is a map $\star:\overline{\mathcal{F}}(R)\rightarrow\overline{\mathcal{F}}(R)$, $E\mapsto E^{\star}$, such that, for all $0\neq x\in K$, and for all $E, F\in\overline{\mathcal{F}}(R)$, the following four properties
hold:
\begin{description}
  \item[$\star_1$] $(xE)^{\star}=xE^{\star}$;
  \item[$\star_2$] $E\subseteq F \Rightarrow E^{\star}\subseteq F^{\star}$;
  \item[$\star_3$] $E\subseteq E^{\star}$;
  \item[$\star_4$] $E^{\star\star}:=(E^{\star})^{\star}=E^{\star}$.
\end{description}
A semistar operation $\star$ is called a \emph{(semi)star operation on $R$}, if $R^{\star}=R$.

The set $\SStar(R)$ of all semistar operations on $R$ is endowed with a natural partial order. If $\star_1$ and $\star_2$ are semistar operations on $R$, one says that $\star_1\leq\star_2$ if $E^{\star_1}\subseteq E^{\star_2}$ for each $E\in\overline{\mathcal{F}}(R)$. This is equivalent to saying that $(E^{\star_1})^{\star_2}=E^{\star_2}=(E^{\star_2})^{\star_1}$ for each $E\in\overline{\mathcal{F}}(R)$ (cf. \cite[Lemma 16]{OM}). Recall that an ideal $I$ of $R$ is a \emph{quasi-$\star$-ideal} if $I^{\star}\cap R=I$, and is a \emph{quasi-$\star$-prime}, if it is prime and quasi-$\star$-ideal.

We say that a semistar operation $\star\in\SStar(R)$ is \emph{homogeneous preserving} if $\star$ sends homogeneous fractional ideals of $R$ to homogeneous ones (see \cite{s14} and \cite{s18}). In particular if $\star$ is a homogeneous preserving semistar operation on $R$, we have $R^{\star}\subseteq R_H$. It is shown in \cite[Lemma 2.1]{s14} that, if $\star$ is a finite type homogeneous preserving semistar operation on $R$ (see Example \ref{ex}(c)) such that $R^{\star}\subsetneq R_H$, then each proper homogeneous quasi-$\star$-ideal of $R$ is contained in a homogeneous quasi-$\star$-prime ideal of $R$.

The set of homogenous preserving semistar operations $\star$ on $R$ is denoted by $\SStar_{hp}(R)$. Here we note that, recently the notion of homogeneous star operations was defined in \cite[Definition 3.4]{FO12} on homogeneous ideals of the $\mathbb{Z}$-graded domain $S:=L[X_0,\ldots,X_n]$ where $L$ is a field, satisfying the same axioms of a star operation \cite[Section 32]{g72}. In particular, a homogeneous preserving (semi)star operation on $S$, is a homogeneous star operation in the sense of \cite{FO12}.

In the following example we collect various homogeneous preserving semistar operations on $R$.

\begin{example}\label{ex}{\em Let $R=\bigoplus_{\alpha\in\Gamma}R_{\alpha}$ be a
graded integral domain.
\begin{itemize}
  \item[(a)] $d:=d_R$ denotes the identity semistar operation on $R$, is homogeneous preserving.
  \item[(b)] Let $T$ be an overring of $R$, we denote by $\star_{\{T\}}$ the semistar operation on $R$ by setting $F^{\star_{\{T\}}}:=FT$, for any $F\in \overline{\mathcal{F}}(R)$. It is easy to see that if $T$ is a homogeneous overring of $R$, then $\star_{\{T\}}$ is homogeneous preserving.
  \item[(c)] Let $\star$ be a semistar operation on $R$. We can associate a new semistar operation $\star_f$ on $R$ by setting $$E^{\star_f}:=\bigcup\{F^{\star}\mid F\in f(R)\text{ }and\text{ }F\subseteq E\},$$ for any $E\in \overline{\mathcal{F}}(R)$. We call the semistar operation $\star_f$ the finite type semistar operation associated to $\star$. We call $\star$ a \emph{semistar operation of finite type} if $\star=\star_f$. Note that $(\star_f)_f=\star_f$, and thus ${\star_f}$ is a semistar operation of finite type. The set of finite type semistar operations on $R$ is denoted by $\SStar_f(R)$. By \cite[Lemma 2.4]{s14}, if $\star$ is homogeneous preserving, then also $\star_f$ is homogeneous preserving.
  \item[(d)] $v$ denotes the divisorial semistar operation on $R$, defined by $F^v:=(R:(R:F))$, for any $F\in \overline{\mathcal{F}}(R)$. By \cite[Proposition 2.5]{AA2}, the $v$-operation is homogeneous preserving. The finite type semistar operation associated to $v$ is usually denoted by $t$. Hence the $t$-operation is homogeneous preserving by part (c). 
  \item[(e)] Let $\mathcal{S}$ be a nonempty collection of elements of $\SStar_{hp}(R)$. Then $\bigwedge(\mathcal{S})$ is homogeneous preserving semistar operation on $R$ defined by setting $$F^{\bigwedge(\mathcal{S})}:=\bigcap\{F^{\star}\mid\star\in \mathcal{S}\}\text{ for any }F\in \overline{\mathcal{F}}(R).$$ It is easy to see that $\bigwedge(\mathcal{S})$ is the infimum of $\mathcal{S}$ in the partially ordered set $(\SStar_{hp}(R),\leq)$. Moreover, the semistar operation $$\bigvee(\mathcal{S}):=\bigwedge(\{\sigma\in\SStar_{hp}(R)\mid\sigma\geq\star,\text{ for any }\star\in \mathcal{S}\})$$ is the supremum of $\mathcal{S}$ in the partially ordered set $(\SStar_{hp}(R),\leq)$ is homogeneous preserving.
  \item[(f)] Let $Y$ be a nonempty collection of overrings of $R$. Then $\wedge_Y:=\bigwedge(\{\star_{\{B\}}\mid B\in Y\})$ is a semistar operation on $R$. In other words, the semistar operation $\wedge_Y$ is defined by setting $$F^{\wedge_Y}:=\bigcap\{FB\mid B\in Y\}\text{ for any }F\in \overline{\mathcal{F}}(R).$$ It is easy to see that if $Y$ consists of homogeneous overrings of $R$, then $\wedge_Y$ is homogeneous preserving.
  \item[(g)] Let $X$ be a nonempty collection of homogeneous prime ideals of $R$. Then the semistar operation $h_X:=\wedge_{\{R_{H\setminus \frak{p}}\mid\frak{p}\in X\}}$ is homogeneous preserving.
  \item[(h)] We say that a semistar operation $\star$ is \emph{stable} if $(E\cap F)^{\star}=E^{\star}\cap F^{\star}$, for any $E, F\in \overline{\mathcal{F}}(R)$. Since localization commutes with finite intersections, any semistar operation in part (g) is stable.
  \item[(i)] Let $\star$ be semistar operation in $R$. For any $F\in \overline{\mathcal{F}}(R)$ set $$F^{\widetilde{\star}}:=\bigcup\{(F:\fa)\mid \fa\text{ is a finitely generated ideal of }R\text{ and }\fa^{\star}=R^{\star}\}.$$ Then $\widetilde{\star}$ is a stable and of finite type semistar operation on $R$ and $\widetilde{\star}\leq\star_f$ (see \cite{FL2}). We say that $\widetilde{\star}$ is \emph{the stable semistar operation associated to $\star$}. By the following Proposition \ref{pp}, $\widetilde{\star}$ is homogeneous preserving for each semistar operation $\star$ on $R$ such that $R^{\widetilde{\star}}\subseteq R_H$.

  \item[(j)] Let $\star$ be a semistar operation on an integral domain $R$. We say that $\star$ is an \emph{\texttt{e.a.b.} (endlich arithmetisch brauchbar) semistar operation} of $R$ (after Krull and Gilmer, see \cite[Remark 1]{fl09}) if, for all $E, F, G\in f(R)$, $(EF)^{\star}\subseteq(EG)^{\star}$ implies that $F^{\star}\subseteq G^{\star}$ (\cite[Definition 2.3 and Lemma 2.7]{FL2}). We can associate  to any semistar operation $\star$ on $R$, an \texttt{e.a.b.} semistar operation of finite type $\star_a$ on $R$, called the \emph{\texttt{e.a.b.} semistar operation associated to $\star$}, defined as follows for each $F\in f(R)$ and for each $E\in\overline{\mathcal{F}}(R)$:
\begin{align*}
F^{\star_a}:=&\bigcup\{((FH)^{\star}:H^{\star})\mid  H\in f(R)\},\\[1ex]
E^{\star_a}:=&\bigcup\{F^{\star_a}\mid  F\subseteq E, F\in f(R)\}
\end{align*}
\cite[Definition 4.4 and Proposition 4.5]{FL2}. It is known that $\star_f\leq\star_a$ \cite[Proposition 4.5(3)]{FL2}. Obviously $(\star_f)_a=\star_a$. Moreover, when $\star=\star_f$, then $\star$ is \texttt{e.a.b.} if and only if $\star=\star_a$ \cite[Proposition 4.5(5)]{FL2}. By the following Corollary \ref{c}, $\star_a\in\SStar_{hp}(R)$ for each homogeneous preserving semistar operation $\star$ on $R$.
\end{itemize}}
\end{example}

For the convenience of the reader we state \cite[Proposition 2.3]{s14} here correcting a typo in its proof. If $a\in R$, we denote by $C(a)$ the homogeneous ideal of $R$ generated by the homogeneous components of $a$.

\begin{proposition}\label{pp} Let $R=\bigoplus_{\alpha\in\Gamma}R_{\alpha}$ be a graded integral domain, and $\star$ be a semistar operation on $R$ such that $R^{\widetilde{\star}}\subseteq R_H$. Then, $\widetilde{\star}$ is homogeneous preserving. 
\end{proposition}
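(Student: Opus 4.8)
The plan is to reduce the statement to homogeneous \emph{integral} ideals and then to detach the homogeneous components of a $\widetilde{\star}$-closed element by a Dedekind--Mertens argument. First, it is enough to show that $I^{\widetilde{\star}}$ is a homogeneous $R$-submodule of $R_H$ for every homogeneous integral ideal $I$ of $R$: indeed $I^{\widetilde{\star}}\subseteq R^{\widetilde{\star}}\subseteq R_H$ by $\star_2$ and the hypothesis, while for an arbitrary homogeneous fractional ideal $F$ one picks $s\in H$ with $sF=:I$ integral homogeneous and uses $F^{\widetilde{\star}}=s^{-1}I^{\widetilde{\star}}$ ($\star_1$), which is homogeneous as soon as $I^{\widetilde{\star}}$ is, since $s^{-1}$ is a homogeneous unit of $R_H$. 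Now fix such an $I$ and let $x\in I^{\widetilde{\star}}\subseteq R_H$, $x\neq 0$. By Example~\ref{ex}(i) there is a finitely generated ideal $J=(b_1,\dots,b_n)$ of $R$ with $J^{\star}=R^{\star}$ and $xJ\subseteq I$. As $x$ has only finitely many homogeneous components I may choose $h\in H$ with $hx\in R$; then $hx$ and all the $b_i$ lie in $R$, $(hx)J\subseteq hI$, and, since $(hI)^{\widetilde{\star}}=hI^{\widetilde{\star}}$ by $\star_1$, it suffices to show that each homogeneous component $(hx)_\mu$ of $hx$ lies in $(hI)^{\widetilde{\star}}$; dividing through by $h$ then puts every homogeneous component of $x$ in $I^{\widetilde{\star}}$, and because $I^{\widetilde{\star}}\subseteq R_H$ this is precisely the homogeneity of $I^{\widetilde{\star}}$.

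The next step is to set $C(J):=C(b_1)+\dots+C(b_n)$, the (finitely generated) homogeneous ideal of $R$ generated by all homogeneous components of $b_1,\dots,b_n$. From $J\subseteq C(J)\subseteq R$ we get $R^{\star}=J^{\star}\subseteq C(J)^{\star}\subseteq R^{\star}$, so $C(J)^{\star}=R^{\star}$. Using the standard fact that $A^{\star}=B^{\star}=R^{\star}$ implies $(AB)^{\star}=R^{\star}$, every power $C(J)^{M}$ is again a finitely generated homogeneous ideal with $(C(J)^{M})^{\star}=R^{\star}$.

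The heart of the proof is a Dedekind--Mertens computation in the $\Gamma$-graded domain $R$. For each $i$, the graded Dedekind--Mertens lemma applied to $b_i$ and $hx$ (both in $R$) yields an integer $N_i\ge 1$ with $C(b_i)^{N_i+1}C(hx)=C(b_i)^{N_i}\,C(b_i(hx))$, all content ideals formed in $R$. Since $b_i(hx)=h(b_ix)\in hI$ and $hI$ is homogeneous, each homogeneous component of $b_i(hx)$ lies in $hI$, i.e. $C(b_i(hx))\subseteq hI$; consequently, as $(hx)_\mu\in C(hx)$, we get $(hx)_\mu\,C(b_i)^{N_i+1}\subseteq C(hx)\,C(b_i)^{N_i+1}=C(b_i)^{N_i}\,C(b_i(hx))\subseteq hI$ for every $i$. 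Putting $N:=\max_i N_i$ and applying the pigeonhole principle, $C(J)^{\,n(N+1)}=\bigl(\textstyle\sum_i C(b_i)\bigr)^{n(N+1)}\subseteq\sum_i C(b_i)^{N+1}$, so $(hx)_\mu\,C(J)^{\,n(N+1)}\subseteq hI$. Since $C(J)^{\,n(N+1)}$ is a finitely generated homogeneous ideal with $\star$-closure $R^{\star}$, Example~\ref{ex}(i) gives $(hx)_\mu\in(hI)^{\widetilde{\star}}$, finishing the argument.

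I expect the Dedekind--Mertens step to be the main obstacle. One cannot conclude $(hx)_\mu\,C(J)\subseteq hI$ directly, because the homogeneous components of the products $(hx)_\mu(b_i)_\ell$ may cancel one another within a single graded piece of $hI$; the Dedekind--Mertens identity is exactly what controls this cancellation, at the price of passing to a power $C(J)^{M}$, which costs nothing here since that power is still a finitely generated homogeneous $\star$-full ideal. The one remaining point to pin down is a clean statement (and a reference, or short proof) of the Dedekind--Mertens lemma for a $\Gamma$-graded domain with $\Gamma$ torsionless: since $\langle\Gamma\rangle$ admits a total order compatible with its operation, this is the verbatim graded analogue of the classical content formula and is established in the same way.
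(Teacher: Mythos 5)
Your proof is correct and takes essentially the same route as the paper's: reduce to integral homogeneous ideals, apply the graded Dedekind--Mertens content formula $C(b_i)^{N_i+1}C(f)=C(b_i)^{N_i}C(fb_i)$ to each generator of $J$ (the paper cites \cite[Lemma 1.1(1)]{ac13} for exactly this identity), and absorb the resulting inclusions into a single power of the homogeneous content ideal $C(b_1)+\cdots+C(b_n)$, which is still finitely generated, homogeneous, and $\star$-equivalent to $R$. The only (harmless, and in fact slightly more careful) difference is that you first clear denominators with $h\in H$ so the content formula is applied to elements of $R$, a point the paper's proof passes over silently.
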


\begin{proof} Let $I$ be a homogenous ideal of $R$. To show that $I^{\widetilde{\star}}$ is homogeneous let $f\in I^{\widetilde{\star}}$. Then $fJ\subseteq I$ for some finitely generated ideal $J$ of $R$ such that $J^{\star}=R^{\star}$. Suppose that $J=(g_1,\cdots,g_n)$. Using \cite[Lemma 1.1(1)]{ac13}, there is an integer $m\geq2$ such that $C(g_i)^mC(f)=C(g_i)^{m-1}C(fg_i)$ for all $i=1,\cdots,n$. Since $I$ is a homogeneous ideal and $fg_i\in I$, we have $C(fg_i)\subseteq I$. Thus we have $C(g_i)^mC(f)\subseteq I$. Since $J\subseteq(C(g_1)+\cdots+C(g_n))$ we have $(C(g_1)+\cdots+C(g_n))^{\star}=R^{\star}$. Put $J_0:=(C(g_1)+\cdots+C(g_n))^{nm}$. Thus $J_0$ is a finitely generated homogeneous ideal of $R$ such that $J^{\star}_0=R^{\star}$ and that $C(f)J_0\subseteq I$, hence $C(f)\subseteq I^{\widetilde{\star}}$. This means that $I^{\widetilde{\star}}$ is homogeneous. Now assume that $E$ is a homogeneous fractional ideal of $R$. Then there exists $0\neq x\in H$ such that $I:=xE\subseteq R$. Hence $xE^{\widetilde{\star}}=I^{\widetilde{\star}}\subseteq R^{\widetilde{\star}}\subseteq R_H$ is homogeneous. Therefore $E^{\widetilde{\star}}=x^{-1}(xE^{\widetilde{\star}})$ is a homogeneous fractional ideal.
\end{proof}

\section{The space of $\texttt{e.a.b.}$ semistar operations of finite type on a graded domain}

Let $\Gamma$ be a (nonzero) torsionless commutative cancellative monoid, $R=\bigoplus_{\alpha\in\Gamma}R_{\alpha}$ be an integral domain graded by $\Gamma$, $H$ be the set of nonzero homogeneous elements of $R$, and $K$ is the quotient field of the integral domain $R$.

The following example shows that \texttt{e.a.b.} semistar operations on graded domains are not homogeneous preserving in general.

\begin{example} Let $R=D[X]$ be the polynomial ring over an integral domain $D$ with quotient field $L$, then $R$ is a graded integral domain with $\deg(aX^n)=n$ for every $0\neq a\in D$. Then $R_H=L[X,X^{-1}]$ and let $V$ be a valuation overring of $R_H$. Hence $\star:=\star_{\{V\}}$ is an \texttt{e.a.b.} semistar operation on $R$ but it is not homogeneous preserving.
\end{example}

We aim to show that if $\star$ is a homogeneous preserving, then so is $\star_a$. We begin by recalling the definition of Kronecker function rings with respect to $\star$. Let $\star$ be a semistar operation on $R$. The \emph{Kronecker function ring with respect to $\star$} is defined as follows (see, \cite[Section 32]{g72}, \cite{fl03}, \cite{FL2} and \cite{FL3}): $$\mbox{Kr}(R,\star):=\left\{\frac{f}{g}\mid  \begin{array}{l} f,g\in R[X], g\neq0,\text{ and there is }0\neq h\in R[X]\\\text{ such that } c(f)c(h)\subseteq(c(g)c(h))^{\star} \end{array} \right\},$$ where $c(f)$ is the ideal of $R$ generated by the coefficients of $f\in R[X]$. It is known that $\Kr(R,\star)$ is a B\'{e}zout domain, $\Kr(R,\star)=\Kr(R,\star_a)$ and that $F^{\star_a}=F\Kr(R,\star)\cap K$, for each $F\in \mathcal{F}(R)$, \cite[Proposition 4.1]{fl03}.

In working with graded domains, the \texttt{e.a.b.} condition is too strong, as we need the cancelation condition only for homogeneous ideals. We say that a semistar operation $\star$ on $R$ is \emph{graded-\texttt{e.a.b.}}, if for all finitely generated homogeneous fractional ideals $E, F, G$, $(EF)^{\star}\subseteq(EG)^{\star}$ implies that $F^{\star}\subseteq G^{\star}$. In particular, every \texttt{e.a.b.} semistar operation on $R$, is a graded-\texttt{e.a.b.} semistar operation. If $V$ is a gr-valuation overring of $R$, then the semistar operation $\star_{\{V\}}$ is a graded-\texttt{e.a.b.} semistar operation on $R$. More generally, if $Y$ is a nonempty collection of gr-valuation overrings of $R$, then $\wedge_Y$ is a graded-\texttt{e.a.b.} semistar operation on $R$. Note that $\wedge_Y$ is a homogeneous preserving semistar operation on $R$.

For a polynomial $f=f_0+f_1X+\cdots+f_nX^n\in R[X]$, we define the \emph{homogeneous content ideal} of $f$ by $\mathcal{A}_f:=\sum_{i=0}^nC(f_i)$. It is easy to see that $\mathcal{A}_f$ is a homogeneous finitely generated ideal of $R$, and that if $R$ has trivial grading, then $\mathcal{A}_f=c(f)$. The homogeneous content ideal has similar behavior as usual content ideal $c(f)$. For example, the Dedekind-Mertens lemma holds for homogeneous content ideal $\mathcal{A}_f$ \cite[Proposition 2.1]{s18}.

It is possible to define a homogeneous Kronecker function ring using the homogeneous content ideal $\mathcal{A}_f$ as in \cite{s18}. Assume that $\star$ is a semistar operation on $R$. Then we define the \emph{homogeneous Kronecker function ring with respect to $\star$} by the following: $$\KR(R,\star):=\left\{\frac{f}{g}\mid \begin{array}{l} f,g\in R[X], g\neq0,\text{ and there is }0\neq h\in R[X]\\\text{ such that } \mathcal{A}_f\mathcal{A}_h\subseteq(\mathcal{A}_g\mathcal{A}_h)^{\star} \end{array} \right\}.$$ Note that $\KR(R,\star)$ was denoted by $\mbox{KR}(R,\star)$ in \cite{s18}. In the next proposition we collect several properties of $\KR(R,\star)$ that will be useful in the following.

\begin{proposition}\label{Krr} Assume that $\star$ is a graded-\texttt{e.a.b.} semistar operation on $R$. Then
$$
\KR(R,\star)=\left\{\frac{f}{g}\mid \begin{array}{l} f,g\in
R[X],\text{ }g\neq0,\text{ and }\mathcal{A}_f\subseteq \mathcal{A}_g^{\star} \end{array}
\right\}.
$$
and
\begin{enumerate}
\item[(1)] $\KR(R,\star)$ is an integral domain with quotient field $K(X)$.
\item[(2)] $\KR(R,\star)$ is a B\'{e}zout domain.
\item[(3)] $F^{\star}=F\KR(R,\star)\cap R_H$ for every nonzero finitely
generated homogeneous ideal $F$ of $R$.
\item[(4)] $\KR(R,\star)=\KR(R^{\star},\alpha(\star))$, where $\alpha(\star)$ is the restriction of $\star$ to $\overline{\mathcal{F}}(R^{\star})$.
\end{enumerate}
If $\star$ is homogeneous preserving (not necessarily graded-\texttt{e.a.b.}), then
\begin{enumerate}
\item[(5)] $\KR(R,\star)=\KR(R,\star_a)$.
\end{enumerate}
\end{proposition}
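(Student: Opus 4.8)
The plan is to first establish the displayed description of $\KR(R,\star)$ and then read off (1)--(5) by transcribing the classical theory of Kronecker function rings, systematically replacing the content $c(\cdot)$ by the homogeneous content $\mathcal{A}_{(\cdot)}$ and the Dedekind--Mertens lemma by its homogeneous analogue \cite[Proposition 2.1]{s18}; the graded-\texttt{e.a.b.} hypothesis is exactly what makes the cancellation arguments go through using only homogeneous finitely generated ideals. For the description itself: if $\mathcal{A}_f\subseteq\mathcal{A}_g^\star$ then $f/g\in\KR(R,\star)$, taking the auxiliary polynomial $h=1$, since $\mathcal{A}_h=R$ gives $\mathcal{A}_f\mathcal{A}_h=\mathcal{A}_f\subseteq\mathcal{A}_g^\star=(\mathcal{A}_g\mathcal{A}_h)^\star$. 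Conversely, if $\mathcal{A}_f\mathcal{A}_h\subseteq(\mathcal{A}_g\mathcal{A}_h)^\star$ for some $0\neq h\in R[X]$, then applying $\star$ and using $\star_4$ yields $(\mathcal{A}_h\mathcal{A}_f)^\star\subseteq(\mathcal{A}_h\mathcal{A}_g)^\star$; since $\mathcal{A}_f$, $\mathcal{A}_g$, $\mathcal{A}_h$ are finitely generated homogeneous fractional ideals, the graded-\texttt{e.a.b.} property cancels $\mathcal{A}_h$ to give $\mathcal{A}_f^\star\subseteq\mathcal{A}_g^\star$, i.e. $\mathcal{A}_f\subseteq\mathcal{A}_g^\star$.

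Granting this, part (1) is routine: $\KR(R,\star)$ is a subset of $K(X)$ containing $R[X]$ (take $g=1$), so once it is shown to be a subring its quotient field must be $K(X)=\operatorname{Frac}(R[X])$. Closedness under sums and products follows from $\mathcal{A}_{f_1f_2}\subseteq\mathcal{A}_{f_1}\mathcal{A}_{f_2}$, $\mathcal{A}_{f_1g_2+f_2g_1}\subseteq\mathcal{A}_{f_1}\mathcal{A}_{g_2}+\mathcal{A}_{f_2}\mathcal{A}_{g_1}$, the standard inclusion $(EF)^\star=(E^\star F^\star)^\star$, and the identity $(\mathcal{A}_{g_1}\mathcal{A}_{g_2})^\star=\mathcal{A}_{g_1g_2}^\star$; the last identity comes from the homogeneous Dedekind--Mertens relation $\mathcal{A}_{g_1}^{m}\mathcal{A}_{g_1g_2}=\mathcal{A}_{g_1}^{m+1}\mathcal{A}_{g_2}$ (for $m$ large) by cancelling $\mathcal{A}_{g_1}^{m}$ via graded-\texttt{e.a.b.}. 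For part (2), given $f,g\in R[X]$ set $h:=f+X^{n}g$ with $n>\deg f$; the nonzero coefficients of $h$ are exactly those of $f$ together with those of $g$, so $\mathcal{A}_h=\mathcal{A}_f+\mathcal{A}_g$, and from the description $\mathcal{A}_f,\mathcal{A}_g\subseteq\mathcal{A}_h\subseteq\mathcal{A}_h^\star$ shows $f/h,g/h\in\KR(R,\star)$, whence $(f,g)\KR(R,\star)=h\KR(R,\star)$. An arbitrary finitely generated ideal of $\KR(R,\star)$ is of the form $g^{-1}J$ with $J$ generated by finitely many elements of $R[X]$; iterating the two-generator computation makes $J$ principal, so $\KR(R,\star)$ is a B\'{e}zout domain.

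For part (3), choose homogeneous generators $a_1,\dots,a_n\in H$ of $F$ and put $f:=\sum_{i=1}^{n}a_iX^{i-1}\in R[X]$; since each $a_i$ is homogeneous, $C(a_i)=(a_i)$ and $\mathcal{A}_f=F$, and as in part (2) one gets $F\KR(R,\star)=f\KR(R,\star)$. For $F^\star\subseteq F\KR(R,\star)\cap R_H$, recall that $F^\star$ is a homogeneous fractional ideal (so $F^\star\subseteq R_H$); writing $\beta\in F^\star$ as $\beta=c/d$ with $c\in R$, $d\in H$, homogeneity gives $C(c)=dC(\beta)\subseteq dF^\star=\mathcal{A}_{df}^\star$, hence $\beta=fc/(df)\in f\KR(R,\star)=F\KR(R,\star)$. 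For the reverse inclusion, take $\beta\in F\KR(R,\star)\cap R_H=f\KR(R,\star)\cap R_H$, write $\beta=fp/q$ with $p,q\in R[X]$ and $\mathcal{A}_p\subseteq\mathcal{A}_q^\star$, and clear a homogeneous denominator $\beta=b/s$ ($b\in R$, $s\in H$), so that $bq=sfp$; a chain of homogeneous Dedekind--Mertens relations together with the graded-\texttt{e.a.b.} cancellation then forces $(\beta)^\star\subseteq\mathcal{A}_f^\star=F^\star$, i.e. $\beta\in F^\star$. Part (4) is a change-of-ring compatibility: $R^\star$ is a graded overring of $R$ and $\alpha(\star)$ a semistar operation on it, and in the description one passes between $R[X]$ and $R^\star[X]$ by clearing a homogeneous denominator $s\in H$ from the coefficients, using $\mathcal{A}_{sf}\subseteq s\mathcal{A}'_f$ (with $\mathcal{A}'$ the homogeneous content over $R^\star$) together with the identities $(EG)^\star=(E^\star G^\star)^\star$ and $(ER^\star)^{\alpha(\star)}=(ER^\star)^\star$.

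Finally, part (5). Since $\KR(R,\star)$ is built only from the finitely generated ideals $\mathcal{A}_g\mathcal{A}_h$, on which $\star$ and $\star_f$ coincide, $\KR(R,\star)=\KR(R,\star_f)$; as $\star\mapsto\KR(R,\star)$ is plainly order-preserving and $\star_f\leq\star_a$, this gives $\KR(R,\star)\subseteq\KR(R,\star_a)$. For the reverse inclusion, let $f/g\in\KR(R,\star_a)$; since $\star_a$ is \texttt{e.a.b.}, hence graded-\texttt{e.a.b.}, the part already proved gives $\mathcal{A}_f\subseteq\mathcal{A}_g^{\star_a}$, and unwinding the definition of $\star_a$ together with finite generation of $\mathcal{A}_f$ produces a finitely generated ideal $N$ of $R$ with $(\mathcal{A}_fN)^\star\subseteq(\mathcal{A}_gN)^\star$. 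The remaining step --- and the one I expect to be the genuine obstacle, with no purely formal shortcut --- is to replace this not-necessarily-homogeneous $N$ by a finitely generated homogeneous ideal, equivalently to produce $0\neq h\in R[X]$ with $\mathcal{A}_f\mathcal{A}_h\subseteq(\mathcal{A}_g\mathcal{A}_h)^\star$: taking $q\in R[X]$ whose coefficients form a generating set of $N$, so that $\mathcal{A}_q$ is the homogeneous ideal generated by the homogeneous components of those generators, one must transfer the cancellable containment from $N$ to $\mathcal{A}_q$, and this is exactly where the homogeneous Dedekind--Mertens lemma (applied to $fq$ and $gq$) and the assumption that $\star$ is homogeneous preserving (used on the homogeneous ideals $\mathcal{A}_f$ and $\mathcal{A}_g$) must be combined.
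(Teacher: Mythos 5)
Note first that the paper does not actually prove this proposition: it cites \cite[Lemma 2.9]{s18} for the displayed description and (1)--(3), \cite[Theorem 2.10]{s18} for (5), and refers the reader to the proof of \cite[Proposition 3.3]{FL1} for (4). Your reconstruction of the outsourced material is on the right track where it is explicit: the simplified description (take $h=1$ in one direction, cancel the finitely generated homogeneous ideal $\mathcal{A}_h$ by graded-\texttt{e.a.b.} in the other), the subring and B\'{e}zout arguments via the Gaussian identity $(\mathcal{A}_{g_1}\mathcal{A}_{g_2})^{\star}=\mathcal{A}_{g_1g_2}^{\star}$ obtained from the homogeneous Dedekind--Mertens lemma plus cancellation, and the trick $h=f+X^{n}g$ are exactly the classical template transported to homogeneous contents, and those parts are correct.

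There are, however, two genuine gaps. In (3) you assert that $F^{\star}$ is a homogeneous fractional ideal; that does not follow from the graded-\texttt{e.a.b.} hypothesis alone (the paper's own Example with $\star_{\{V\}}$, $V$ a valuation overring of $R_H$, is \texttt{e.a.b.}, hence graded-\texttt{e.a.b.}, yet $F^{\star}=FV\not\subseteq R_H$), so the inclusion $F^{\star}\subseteq F\KR(R,\star)\cap R_H$ uses a homogeneity input that you invoke silently and that must come from the hypotheses of the cited lemma. More seriously, (5) is not proved: you correctly isolate the crux --- replacing the not-necessarily-homogeneous auxiliary ideal $N$ satisfying $\mathcal{A}_fN\subseteq(\mathcal{A}_gN)^{\star}$ by a finitely generated homogeneous one --- and then stop, merely asserting that homogeneity of $\star$ and Dedekind--Mertens ``must be combined''. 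The missing step is short and does not in fact need Dedekind--Mertens: write $N=(c_1,\dots,c_k)$ and put $N':=C(c_1)+\cdots+C(c_k)\supseteq N$; then $(\mathcal{A}_gN')^{\star}$ is a homogeneous ideal (this is where the homogeneous preserving hypothesis enters) containing $\mathcal{A}_fN$, and since $\mathcal{A}_f$ is generated by homogeneous elements $a$ and $C(ac_i)=aC(c_i)$ for homogeneous $a$, every homogeneous component of each $ac_i$ already lies in $(\mathcal{A}_gN')^{\star}$; hence $\mathcal{A}_fN'\subseteq(\mathcal{A}_gN')^{\star}$ and $f/g\in\KR(R,\star)$. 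Be aware that you cannot shortcut this by appealing to Proposition \ref{b2}(3), since in the paper that statement is itself deduced from part (5) of the present proposition.
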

\begin{proof} For (1), (2), and (3) see \cite[Lemma 2.9]{s18} and (5) follows from \cite[Theorem 2.10]{s18}. For (4) follow the proof of \cite[Proposition 3.3]{FL1}.
\end{proof}

We note here that, for a homogeneous preserving semistar operation $\star$ on $R$, the equality $\KR(R,\star)=\Kr(R,\star)$ \emph{does not hold} in general, see \cite[Example 3.9]{s18}. However $\KR(R,\star)$ is equal to a (usual) Kronecker function ring $\Kr(R,\star^{\prime})$ for some semistar operation $\star^{\prime}$ on $R$ (see Remark \ref{halter}(2)). Also we observe that, if $R$ is a gr-valuation domain, with homogeneous maximal ideal $M$, we have that $R_M$ is a valuation domain. Then, $$\KR(R,d_R)=R[X]_{M[X]}=R_M(X)=\Kr(R_M,d_{R_M})$$ by \cite[Corollary 3.8]{s18} and \cite[Thorem 33.4]{g72}.

\begin{remark}\label{halter} (1) Let $K$ be a field, $X$ an indeterminate over $K$, and $S$ a subring of $K(X)$. We call $S$ a \emph{$K$-function ring} (after Halter-Kock \cite{h03}), if $X$ and $X^{-1}$ belongs to $S$, and for each nonzero polynomial $f\in K[X]$, $f(0)\in f(X)S$. If $S:=\KR(R,\star_a)$, then it can be seen easily from the definition that $S$ is a $K$-function ring.

(2) Assume that $\star$ is a graded-\texttt{e.a.b.} semistar operation on $R$. Let $S:=\KR(R,\star)$. Since $S$ is a $K$-function ring, we have by \cite[Theorem 3.11]{FL2} that the mapping $$E\mapsto E^{\star_S}:=\bigcup\{FS\cap K\mid F\in f(R), F\subseteq E\}\text{ for each }E\in \overline{\mathcal{F}}(R),$$ is a semistar operation of finite type on $R$ such that $S=\Kr(R,\star_S)$ is the usual Kronecker function ring.

(3) Assume that $\star$ is a homogeneous preserving semistar operation on $R$.  Then $\Kr(R,\star)\subseteq\KR(R,\star)$. Indeed, if $\frac{f}{g}\in\Kr(R,\star)$, then $c(f)c(h)\subseteq (c(g)c(h))^{\star}$ for some $0\neq h\in R[X]$. Hence $c(f)c(h)\subseteq (\mathcal{A}_g\mathcal{A}_h)^{\star}$, and since $(\mathcal{A}_g\mathcal{A}_h)^{\star}$ is a homogeneous ideal of $R$, we have $\mathcal{A}_f\mathcal{A}_h\subseteq (\mathcal{A}_g\mathcal{A}_h)^{\star}$. Thus $\frac{f}{g}\in\KR(R,\star)$.
\end{remark}

\begin{lemma}\label{cap} Let $Y$ be a nonempty collection of gr-valuation overrings of $R$. Then $$\KR(R,\wedge_Y)=\bigcap_{V\in Y}\KR(R,\wedge_{\{V\}})=\bigcap_{V\in Y}V_{M_V}(X),$$ where $M_V$ is the homogeneous maximal ideal of $V$.
\end{lemma}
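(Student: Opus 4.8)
The plan is to establish the two equalities in Lemma~\ref{cap} separately, using the definitions of $\wedge_Y$ and $\KR$ together with the structural facts already recorded.

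\medskip

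\emph{The chain $\KR(R,\wedge_Y)=\bigcap_{V\in Y}\KR(R,\wedge_{\{V\}})$.} Since each $V\in Y$ is a gr-valuation overring, each $\wedge_{\{V\}}=\star_{\{V\}}$ is graded-\texttt{e.a.b.}, and so is $\wedge_Y$; hence by Proposition~\ref{Krr} we may use the clean description $\KR(R,\star)=\{f/g\mid f,g\in R[X],\ g\neq0,\ \mathcal{A}_f\subseteq\mathcal{A}_g^{\star}\}$ for $\star=\wedge_Y$ and for each $\star=\wedge_{\{V\}}$. Now for a fixed pair $f,g$ with $g\neq 0$, the condition $\mathcal{A}_f\subseteq\mathcal{A}_g^{\wedge_Y}$ reads $\mathcal{A}_f\subseteq\bigcap_{V\in Y}\mathcal{A}_gV$, which holds if and only if $\mathcal{A}_f\subseteq\mathcal{A}_gV=\mathcal{A}_g^{\wedge_{\{V\}}}$ for every $V\in Y$. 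Thus $f/g\in\KR(R,\wedge_Y)$ iff $f/g\in\KR(R,\wedge_{\{V\}})$ for all $V\in Y$, which is exactly the first equality. (One should check that $\mathcal{A}_g^{\wedge_Y}=\bigcap_{V\in Y}\mathcal{A}_gV$: this is immediate from the definition $F^{\wedge_Y}=\bigcap_{V\in Y}FV$ applied to the finitely generated homogeneous ideal $F=\mathcal{A}_g$.)

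\medskip

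\emph{The identification $\KR(R,\wedge_{\{V\}})=V_{M_V}(X)$.} Here I would invoke the computation already displayed in the text just before Remark~\ref{halter}: for a gr-valuation domain $V$ with homogeneous maximal ideal $M_V$, one has $\KR(V,d_V)=V[X]_{M_V[X]}=V_{M_V}(X)$. To connect this with $\KR(R,\wedge_{\{V\}})$, note that $\wedge_{\{V\}}=\star_{\{V\}}$, and by Proposition~\ref{Krr}(4) we have $\KR(R,\star_{\{V\}})=\KR(R^{\star_{\{V\}}},\alpha(\star_{\{V\}}))=\KR(V,\alpha(\star_{\{V\}}))$; but $\alpha(\star_{\{V\}})$ is the restriction of $\star_{\{V\}}$ to $\overline{\mathcal{F}}(V)$, which sends $E\mapsto EV=E$, i.e.\ it is the identity operation $d_V$ on $V$. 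Hence $\KR(R,\wedge_{\{V\}})=\KR(V,d_V)=V_{M_V}(X)$, and intersecting over $V\in Y$ gives the final equality. (A small point to verify is that $V$ is a homogeneous overring of $R$ so that Proposition~\ref{Krr}(4) genuinely applies with $R^{\star_{\{V\}}}=V$; this is built into the definition of ``gr-valuation overring of $R$.'')

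\medskip

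I expect the main obstacle to be a bookkeeping one rather than a conceptual one: making sure that the hypotheses of Proposition~\ref{Krr} — in particular that $\wedge_Y$ and each $\wedge_{\{V\}}$ are graded-\texttt{e.a.b.} and homogeneous preserving, so that the simplified presentation of $\KR$ and part~(4) are legitimately available — are all in force, and that the passage from $F^{\wedge_Y}=\bigcap FV$ on finitely generated homogeneous ideals interacts correctly with the ``$\mathcal{A}_f\subseteq\mathcal{A}_g^{\star}$'' description (which is only asserted for finitely generated homogeneous $F$, and $\mathcal{A}_g$ is exactly of that form). Once those are pinned down, both equalities are essentially a direct unwinding of definitions plus the already-established formula $\KR(V,d_V)=V_{M_V}(X)$.
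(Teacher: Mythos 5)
Your proposal is correct and follows essentially the same route as the paper: the first equality via the observation that $\mathcal{A}_f\subseteq\mathcal{A}_g^{\wedge_Y}$ if and only if $\mathcal{A}_f\subseteq\mathcal{A}_g^{\wedge_{\{V\}}}$ for every $V\in Y$ (using the simplified description of $\KR$ for graded-\texttt{e.a.b.} operations), and the second via Proposition~\ref{Krr}(4) together with $\KR(V,d_V)=V[X]_{M_V[X]}=V_{M_V}(X)$. The only difference is that you spell out the bookkeeping (that $\wedge_Y$ and each $\wedge_{\{V\}}$ are graded-\texttt{e.a.b.} and that $\alpha(\star_{\{V\}})=d_V$) which the paper leaves implicit.
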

\begin{proof} The first equality follows easily from the following observation for $f,g\in R[X]\setminus\{0\}$, we have $$\mathcal{A}_f\subseteq\mathcal{A}_g^{\wedge_Y}\Leftrightarrow\mathcal{A}_f\subseteq\mathcal{A}_g^{\wedge_{\{V\}}}\text{ for every } V\in Y.$$ For the second equality note, for each $V\in Y$, one has $$\KR(R,\wedge_{\{V\}})=\KR(V,d_V)=V[X]_{M_V[X]}=V_{M_V}(X),$$ by Proposition \ref{Krr}(4) and \cite[Corollary 3.8]{s18}.
\end{proof}

Similar to \cite[Page 2099]{fl09}, we can associate  to any semistar operation $\star$ on $R$, a graded-\texttt{e.a.b.} semistar operation of finite type $\star_{a(h)}$ on $R$, define as follows. For each $F\in f(R)$ and each $E\in\overline{\mathcal{F}}(R)$:
\begin{align*}
F^{\star_{a(h)}}:=&\bigcup\{((FH)^{\star}:H^{\star})\mid  H\in f_h(R)\},\\[1ex]
E^{\star_{a(h)}}:=&\bigcup\{F^{\star_{a(h)}}\mid  F\subseteq E, F\in f(R)\},
\end{align*}
where $f_h(R)$ is the set of finitely generated homogeneous fractional ideals of $R$.

\begin{proposition}\label{b}{\em Let $\star$ be a semistar operation on $R=\bigoplus_{\alpha\in\Gamma}R_{\alpha}$. Then
\begin{itemize}
\item[(1)] $\star_{a(h)}$ is a graded-\texttt{e.a.b.} semistar operation on $R$.
\item[(2)] $\star_f\leq\star_{a(h)}\leq\star_a$.
\item[(3)] $(\star_{a(h)})_a=\star_a$.
\end{itemize}}
\end{proposition}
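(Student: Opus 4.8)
The plan is to prove the three items more or less in parallel with the classical treatment of $\star_a$ (as in \cite{FL2} and \cite{fl09}), but everywhere replacing ``finitely generated fractional ideals'' by ``finitely generated \emph{homogeneous} fractional ideals'' in the cancellation step, and keeping track of the fact that the outer closure $E\mapsto E^{\star_{a(h)}}$ still ranges over all of $f(R)$. First I would record the basic formal properties: that $F\mapsto F^{\star_{a(h)}}$ on $f(R)$ is order-preserving, that $F\subseteq F^{\star_{a(h)}}$ (take $H=R$), and that $(xF)^{\star_{a(h)}}=xF^{\star_{a(h)}}$ for $0\neq x\in K$; these follow exactly as in the proof of \cite[Proposition 4.5]{FL2}. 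Then the extension to $\overline{\mathcal{F}}(R)$ by $E^{\star_{a(h)}}:=\bigcup\{F^{\star_{a(h)}}\mid F\in f(R),\,F\subseteq E\}$ is automatically of finite type, and the axioms $\star_1$--$\star_3$ transfer; the only delicate axiom is idempotence $\star_4$, for which one needs the graded-\texttt{e.a.b.} property, so (1) and the semistar axioms should be proved together.

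For (1), the heart is to show: if $E,F,G\in f_h(R)$ and $(EF)^{\star_{a(h)}}\subseteq(EG)^{\star_{a(h)}}$, then $F^{\star_{a(h)}}\subseteq G^{\star_{a(h)}}$. Here is where the homogeneity of the auxiliary ideals pays off: given $x\in F^{\star_{a(h)}}$, there is $H\in f_h(R)$ with $xH^{\star}\subseteq(FH)^{\star}$, i.e. $xH\subseteq(FH)^{\star_{a(h)}}$; multiplying by $E$ and using the hypothesis gives $xEH\subseteq(EFH)^{\star_{a(h)}}\subseteq$ (after one more unravelling with another homogeneous $H'\in f_h(R)$) something of the form $((EGHH')^{\star}:(HH')^{\star})$-type containment, and since $EHH'\in f_h(R)$ one can take $EHH'$ as the new test ideal to conclude $x\in G^{\star_{a(h)}}$. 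The reason this works verbatim on graded domains, whereas a naive ``$\star$ restricted to homogeneous ideals'' need not, is precisely that products and sums of homogeneous fractional ideals stay homogeneous, so the semigroup $f_h(R)$ is closed under the operations used in the cancellation argument. Idempotence $\star_4$ then follows from graded-\texttt{e.a.b.} by the standard argument: $(F^{\star_{a(h)}})^{\star_{a(h)}}$ is reached in one step because the test ideals compose, using that $(FH)^{\star_{a(h)}}=(FH)^{\star_{a(h)}\star_{a(h)}}$ reduces to a cancellation of a common homogeneous factor.

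For (2), the inequality $\star_f\leq\star_{a(h)}$ is immediate by taking $H=R$ in the defining union (so $F^{\star}\subseteq F^{\star_{a(h)}}$ for $F\in f(R)$, hence $E^{\star_f}\subseteq E^{\star_{a(h)}}$ on all of $\overline{\mathcal{F}}(R)$ since both are of finite type), and $\star_{a(h)}\leq\star_a$ holds because the union defining $F^{\star_{a(h)}}$ is taken over the \emph{smaller} index set $f_h(R)\subseteq f(R)$, so $F^{\star_{a(h)}}\subseteq F^{\star_a}$ termwise, and again both are of finite type. For (3), one inclusion $(\star_{a(h)})_a\geq\star_a$ follows from (2) together with monotonicity of $\star\mapsto\star_a$ and the identity $(\star_f)_a=\star_a$: from $\star_f\leq\star_{a(h)}$ we get $\star_a=(\star_f)_a\leq(\star_{a(h)})_a$. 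For the reverse, from $\star_{a(h)}\leq\star_a$ and $\star_a$ being already \texttt{e.a.b.} of finite type, hence $(\star_a)_a=\star_a$, monotonicity gives $(\star_{a(h)})_a\leq(\star_a)_a=\star_a$. So (3) is essentially formal once (2) is in hand, modulo knowing $\star\mapsto\star_a$ is order-preserving, which is \cite[Proposition 4.5]{FL2}.

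The main obstacle is the cancellation step in (1): one must verify carefully that when the hypothesis $(EF)^{\star_{a(h)}}\subseteq(EG)^{\star_{a(h)}}$ is expanded through the two-layer definition (outer union over $f(R)$, inner union over $f_h(R)$), the auxiliary ideals that appear can always be arranged to be \emph{homogeneous}, so that one may legitimately invoke the graded-\texttt{e.a.b.}-style cancellation of a common homogeneous factor $E$ (or a homogeneous multiple of it). Concretely, one should first reduce to the case where $E,F,G$ themselves are generated by homogeneous elements (clearing a homogeneous denominator in $H$), then chase the inclusions keeping every test ideal inside $f_h(R)$; the bookkeeping is routine once one notices $f_h(R)$ is multiplicatively closed, but it is the one place where ``homogeneous preserving'' is genuinely used rather than a cosmetic restriction.
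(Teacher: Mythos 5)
Your proposal is correct and follows essentially the same route as the paper: part (1) by transplanting the argument of \cite[Proposition 4.5(1),(2)]{FL2} with $f_h(R)$ replacing $f(R)$ as the family of test ideals (the paper simply cites that proof, while you usefully flag that the key enabling fact is that $f_h(R)$ is multiplicatively closed), part (2) by taking $H=R\in f_h(R)$ and by the inclusion $f_h(R)\subseteq f(R)$, and part (3) by the same formal sandwich $\star_a=(\star_f)_a\leq(\star_{a(h)})_a\leq(\star_a)_a=\star_a$. The only minor imprecision is attributing idempotence of $\star_{a(h)}$ to the graded-\texttt{e.a.b.} property, whereas it really comes from the directedness of the union over the multiplicatively closed family of test ideals; this does not affect the validity of the argument.
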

\begin{proof} For (1), follow the proof of \cite[Proposition 4.5(1) and (2)]{FL2}. For (2), assume that $F\in f(R)$ and $x\in F^{\star}$. Then $x\in(F^{\star}:R)\subseteq F^{\star_{a(h)}}$. This means that $\star_f\leq\star_{a(h)}$. The second inequality in (2), follows from the definition. Since $\star_f\leq\star_{a(h)}\leq\star_a$ by (2), we have $\star_a=(\star_f)_a\leq(\star_{a(h)})_a\leq(\star_a)_a=\star_a$ by \cite[Proposition 4.5(4) and (8)]{FL2}. Hence $(\star_{a(h)})_a=\star_a$ and the proof of (3) completes.
\end{proof}

\begin{proposition}\label{b2}{\em Let $\star$ be a homogeneous preserving semistar operation on $R=\bigoplus_{\alpha\in\Gamma}R_{\alpha}$. Then
\begin{itemize}
\item[(1)] $\star_{a(h)}$ is homogeneous preserving.
\item[(2)] $\KR(R,\star_{a(h)})=\KR(R,\star_a)$.
\item[(3)] For each $F\in f_h(R)$ we have $$F^{\star_a}=\bigcup\{((FH)^{\star}:H^{\star})\mid  H\in f_h(R)\}=F^{\star_{a(h)}}.$$
\end{itemize}}
\end{proposition}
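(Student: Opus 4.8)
The plan is to prove the three parts in order, bootstrapping each from the previous and from the machinery already set up. For part (1), I would argue exactly as in Proposition \ref{pp}: the only subtlety is that $\star_{a(h)}$ is built from a colon construction $((FH)^{\star}:H^{\star})$ with $H$ ranging over \emph{homogeneous} finitely generated fractional ideals. Since $\star$ is homogeneous preserving and $F,H\in f_h(R)$, both $(FH)^{\star}$ and $H^{\star}$ are homogeneous; a colon of two homogeneous fractional ideals (with the denominator homogeneous) is again homogeneous, because if $J,J'$ are homogeneous then $(J:J')=\bigoplus_\alpha((J:J')\cap (R_H)_\alpha)$ — one checks that an element of $(J:J')$ has all its homogeneous components in $(J:J')$ using that $J,J'$ are homogeneous and $J'$ is finitely generated homogeneous (so it suffices to test against homogeneous generators of $J'$). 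Hence $F^{\star_{a(h)}}$ is a directed union of homogeneous fractional ideals, so it is homogeneous; and then for a general $E\in\overline{\mathcal{F}}(R)$, the definition $E^{\star_{a(h)}}=\bigcup\{F^{\star_{a(h)}}\mid F\subseteq E, F\in f(R)\}$ need only be tested on homogeneous $F$ when $E$ itself is homogeneous (clearing denominators and using $C(f)$ as in Proposition \ref{pp}), which gives that $E^{\star_{a(h)}}$ is homogeneous. This is the step I expect to require the most care, precisely the homogeneity-of-colon-ideals point and the reduction to homogeneous generators $F$ of $E$.

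For part (2), the key input is Proposition \ref{Krr}(5), which says $\KR(R,\sigma)=\KR(R,\sigma_a)$ for any homogeneous preserving semistar operation $\sigma$. Apply this with $\sigma=\star_{a(h)}$, which is homogeneous preserving by part (1): we get $\KR(R,\star_{a(h)})=\KR(R,(\star_{a(h)})_a)$. Now invoke Proposition \ref{b}(3), which asserts $(\star_{a(h)})_a=\star_a$. Combining, $\KR(R,\star_{a(h)})=\KR(R,\star_a)$, as desired. (Implicitly one also uses that $\star_{a(h)}$ is graded-\texttt{e.a.b.} by Proposition \ref{b}(1), so that Proposition \ref{Krr} applies; but Proposition \ref{Krr}(5) is stated only under the homogeneous-preserving hypothesis, which is what we need.)

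For part (3), let $F\in f_h(R)$. By Proposition \ref{b}(2) we already have $\star_f\le\star_{a(h)}\le\star_a$, so $F^{\star_{a(h)}}\subseteq F^{\star_a}$; the middle term $\bigcup\{((FH)^{\star}:H^{\star})\mid H\in f_h(R)\}$ is by definition exactly $F^{\star_{a(h)}}$ (for $F\in f(R)$, and in particular $F\in f_h(R)$), so it remains to prove $F^{\star_a}\subseteq F^{\star_{a(h)}}$. For this I would use the Kronecker-function-ring description: by Proposition \ref{Krr}(3) applied to the graded-\texttt{e.a.b.}, homogeneous preserving operation $\star_{a(h)}$, one has $F^{\star_{a(h)}}=F\KR(R,\star_{a(h)})\cap R_H$. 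On the other hand $\star_a$ is the \texttt{e.a.b.} operation with $\Kr(R,\star)=\Kr(R,\star_a)$ and $F^{\star_a}=F\Kr(R,\star)\cap K$ for $F\in\mathcal{F}(R)$; but since $F$ is homogeneous and $\star$ is homogeneous preserving, $F^{\star_a}$ is homogeneous (it equals $F^{\widetilde{\star_a}}$-type reasoning, or more directly: $F^{\star_a}\subseteq R_H$ and $F^{\star_a}$ is homogeneous by Proposition \ref{b2}(1) combined with $\star_a=(\star_{a(h)})_a$ once we know it preserves homogeneous ideals — alternatively one can note $F^{\star_a}\subseteq K$ and intersect). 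The cleanest route: by part (2), $\KR(R,\star_{a(h)})=\KR(R,\star_a)$, and by Proposition \ref{Krr}(3) for $\star_a$ (which is homogeneous preserving, being $(\star_{a(h)})_a$ of a homogeneous preserving operation, hence covered by Proposition \ref{b2}(1) applied to $\star_{a(h)}$... ) we get $F^{\star_a}=F\KR(R,\star_a)\cap R_H=F\KR(R,\star_{a(h)})\cap R_H=F^{\star_{a(h)}}$. The delicate point here is making sure $\star_a$ itself qualifies for Proposition \ref{Krr}(3): it is of finite type and \texttt{e.a.b.}, hence graded-\texttt{e.a.b.}, and it is homogeneous preserving because $\star_a=(\star_{a(h)})_a$ and $(\cdot)_a$ of a homogeneous preserving operation is again homogeneous preserving — this last fact is exactly Corollary \ref{c} (cited in Example \ref{ex}(j)), which may itself be proved using parts (1) and (2) of this proposition, so I would arrange the logical dependencies so that Proposition \ref{b2} is proved first and Corollary \ref{c} deduced afterward, or else close the small circularity by invoking Proposition \ref{Krr}(3) directly for the graded-\texttt{e.a.b.} operation $\star_{a(h)}$ and using $F\KR(R,\star_{a(h)})\cap R_H$ as the definition of the common value. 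Either way, the equality $F^{\star_a}=F\KR(R,\star_{a(h)})\cap R_H$ closes the argument.
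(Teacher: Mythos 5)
Your proposal is correct and follows essentially the same route as the paper: homogeneity of the colon ideals $((FH)^{\star}:H^{\star})$ for (1), Proposition \ref{Krr}(5) combined with $(\star_{a(h)})_a=\star_a$ for (2), and the identification $F^{\sigma}=F\KR(R,\sigma)\cap R_H$ from Proposition \ref{Krr}(3) applied to both operations for (3). The circularity you worry about in (3) is not actually present, since Proposition \ref{Krr}(3) only requires the operation to be graded-\texttt{e.a.b.}, which $\star_a$ satisfies automatically by being \texttt{e.a.b.}, so no homogeneous-preserving hypothesis on $\star_a$ is needed there.
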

\begin{proof} For (1), assume that $F$ is a homogeneous fractional ideal of $R$ and $f\in F^{\star_{a(h)}}$. Then there exists an element $H\in f_h(R)$ such that $f\in((FH)^{\star}:H^{\star})$. Since $\star$ is homogeneous preserving, we have $((FH)^{\star}:H^{\star})$ is a homogeneous fractional ideal by \cite[Proposition 2.5]{AA2}. Hence $C(f)\subseteq((FH)^{\star}:H^{\star})\subseteq F^{\star_{a(h)}}$. Therefore $\star_{a(h)}$ is homogeneous preserving. For (2) note that $\star_{a(h)}$ is homogeneous preserving by (1). Hence by Propositions \ref{Krr}(5) and \ref{b}(3) respectively we have $\KR(R,\star_{a(h)})=\KR(R,(\star_{a(h)})_a)=\KR(R,\star_{a})$. Now (3) follows from (2) and Proposition \ref{Krr}(3).
\end{proof}

\begin{corollary}\label{c} If $\star$ is a homogeneous preserving semistar operation on $R=\bigoplus_{\alpha\in\Gamma}R_{\alpha}$, then $\star_a$ is also homogeneous preserving.
\end{corollary}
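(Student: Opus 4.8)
The plan is to deduce the corollary from the machinery already assembled, using the homogeneous Kronecker function ring as the bridge between $\star$ and $\star_a$. The key point is that for a \emph{graded}-\texttt{e.a.b.} operation we have the formula $F^{\star} = F\KR(R,\star)\cap R_H$ on finitely generated homogeneous ideals (Proposition~\ref{Krr}(3)), and that $\KR$ is insensitive to the passage $\star \leadsto \star_a$ when $\star$ is homogeneous preserving (Proposition~\ref{Krr}(5)).

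First I would start from the auxiliary operation $\star_{a(h)}$. By Proposition~\ref{b}(1) it is graded-\texttt{e.a.b.}, and by Proposition~\ref{b2}(1) it is homogeneous preserving. Next I would invoke Proposition~\ref{b2}(2), which gives $\KR(R,\star_{a(h)}) = \KR(R,\star_a)$. Now apply Proposition~\ref{Krr}(3) to the graded-\texttt{e.a.b.} operation $\star_{a(h)}$: for every $F\in f_h(R)$,
\[
F^{\star_{a(h)}} = F\,\KR(R,\star_{a(h)})\cap R_H = F\,\KR(R,\star_a)\cap R_H .
\]
On the other hand, by Proposition~\ref{b2}(3), $F^{\star_a} = F^{\star_{a(h)}}$ for every $F\in f_h(R)$. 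Combining, $F^{\star_a} = F\KR(R,\star_a)\cap R_H$ is a homogeneous fractional ideal whenever $F$ is finitely generated and homogeneous, since an intersection of a homogeneous submodule of $R_H$ with $R_H$ is homogeneous (and $R^{\star_a}\subseteq R_H$ because $\star_a\geq\star_f\geq\star$ is homogeneous preserving, hence $R^{\star}\subseteq R_H$, forcing $R^{\star_a}\subseteq R_H$ via $F=R$).

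Finally I would bootstrap from $f_h(R)$ to all homogeneous fractional ideals using that $\star_a$ is of finite type. If $E$ is a homogeneous fractional ideal and $g\in E^{\star_a}$, then by the finite-type definition of $\star_a$ there is $F\in f(R)$ with $F\subseteq E$ and $g\in F^{\star_a}$; replacing $F$ by $\sum C(a)$ over a finite generating set (a finitely generated \emph{homogeneous} ideal still contained in $E$, since $E$ is homogeneous), we may take $F\in f_h(R)$, and then $C(g)\subseteq F^{\star_a}\subseteq E^{\star_a}$ by the already-proved homogeneous case. Hence $E^{\star_a}$ is homogeneous, so $\star_a\in\SStar_{hp}(R)$. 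The only slightly delicate step is checking that one may pass from an arbitrary finitely generated subideal $F$ of $E$ to a homogeneous one, but this is exactly the standard content-ideal trick already used in the proof of Proposition~\ref{pp}, so no real obstacle remains; the substance of the argument is entirely carried by Propositions~\ref{Krr}, \ref{b}, and \ref{b2}.
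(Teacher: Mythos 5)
Your proof is correct and follows essentially the same route as the paper: the paper's own argument is exactly your reduction to $F\in f_h(R)$ via the finite-type property of $\star_a$, followed by $F^{\star_a}=F^{\star_{a(h)}}$ (Proposition~\ref{b2}(3)) and the homogeneity of $F^{\star_{a(h)}}$ (Proposition~\ref{b2}(1)). Your detour through $\KR(R,\star_a)$ is redundant --- and your stated reason that $F\KR(R,\star_a)\cap R_H$ is homogeneous is not quite right as phrased, since $F\KR(R,\star_a)$ is a submodule of $K(X)$ rather than of $R_H$ --- but nothing is lost because homogeneity already follows from the two parts of Proposition~\ref{b2} you invoke.
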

\begin{proof} Since $\star_a$ is of finite type, it is enough to show that $F^{\star_a}$ is homogeneous for each $F\in f_h(R)$. But this is the case because, $F^{\star_a}=F^{\star_{a(h)}}$ and $F^{\star_{a(h)}}$ is homogeneous by Proposition \ref{b2}.
\end{proof}

Let $\Zar(R):=\{V\mid V$ is a valuation overring of $R\}$ be equipped with the Zariski topology, i.e., the topology having, as subbasic open subspaces, the subsets $\Zar(R[x])$ for $x$ varying in $K$. The set $\Zar(R)$, endowed with the Zariski topology, is often called the Riemann-Zariski space of $R$ \cite[Chapter VI, Section 17, p. 110]{zs75}. It is well known that every finite type \texttt{e.a.b.} semistar operation of $R$ is of the form $\wedge_Y$, for $\emptyset\neq Y\subseteq\Zar(R)$ \cite[Corollary 5.2]{FL2}. In particular the $b$-operation of Krull, defined by $b:=b_R:=\wedge_{\Zar(R)}$ is an \texttt{e.a.b.}
semistar operation of finite type on $R$.

\begin{corollary} The $b:=b_R$-operation is homogeneous preserving.
\end{corollary}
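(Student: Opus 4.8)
The plan is to exhibit the Krull $b$-operation as the \texttt{e.a.b.} semistar operation associated to a homogeneous preserving operation and then invoke Corollary \ref{c}. The natural candidate is the identity: I claim $b=d_a$, where $d=d_R$ is the identity semistar operation of Example \ref{ex}(a). Granting this, the conclusion is immediate, since $d$ is homogeneous preserving by Example \ref{ex}(a), and hence $d_a=b$ is homogeneous preserving by Corollary \ref{c}.

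So the work is to justify $b=d_a$. By Example \ref{ex}(j), $d_a$ is a finite type \texttt{e.a.b.} semistar operation, hence of the form $\wedge_Y$ for some nonempty $Y\subseteq\Zar(R)$ by \cite[Corollary 5.2]{FL2}. For every $V\in\Zar(R)$ one has $d\leq\star_{\{V\}}$ (because $R\subseteq V$), and $\star_{\{V\}}$ is itself a finite type \texttt{e.a.b.} operation, whence $d_a\leq(\star_{\{V\}})_a=\star_{\{V\}}$; taking the infimum over $\Zar(R)$ gives $d_a\leq\wedge_{\Zar(R)}=b$. For the reverse inclusion I would compare Kronecker function rings: the assignment $\star\mapsto\Kr(R,\star)$ is order preserving, so $d\leq\star_{\{V\}}$ gives $\Kr(R,d)\subseteq\Kr(R,\star_{\{V\}})=V(X)$ for every $V$, hence $\Kr(R,d)\subseteq\bigcap_{V\in\Zar(R)}V(X)=\Kr(R,b)$; together with $\Kr(R,b)\subseteq\Kr(R,d)$ coming from $d\leq b$, this yields $\Kr(R,d)=\Kr(R,b)$. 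Since $\Kr(R,d)=\Kr(R,d_a)$, we get $\Kr(R,b)=\Kr(R,d_a)$, and as $b$ and $d_a$ are both of finite type and \texttt{e.a.b.}, the identity $F^{\star}=F\Kr(R,\star)\cap K$ (valid for $F\in\mathcal{F}(R)$ when $\star=\star_a$) forces $F^b=F^{d_a}$ on $f(R)$, and then on all of $\overline{\mathcal{F}}(R)$ by the finite type property. Thus $b=d_a$. (One may instead simply quote that $b=d_a$ is classical, see \cite[Section 32]{g72} and \cite{fl03}.)

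I expect the only real content to be this identification $b=d_a$; the rest is a one-line appeal to Corollary \ref{c}. The point worth emphasising is that $b$ ranges over \emph{all} valuation overrings of $R$, not merely the gr-valuation ones, so homogeneous preservation of $b$ is \emph{not} visible from Examples \ref{ex}(f),(g) — it genuinely requires the fact, established via the homogeneous Kronecker function ring in Corollary \ref{c}, that the \texttt{e.a.b.} operation associated to a homogeneous preserving operation stays homogeneous preserving.
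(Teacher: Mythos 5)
Your proposal is exactly the paper's proof: the paper's entire argument is ``$b=d_a$ by \cite[Lemma 1]{fp11}, hence $b$ is homogeneous preserving by Corollary \ref{c}.'' One caveat about your optional self-contained justification of $b=d_a$: the inclusion $\Kr(R,b)\subseteq\Kr(R,d)$ does \emph{not} follow from $d\leq b$, since monotonicity of $\star\mapsto\Kr(R,\star)$ gives the opposite containment $\Kr(R,d)\subseteq\Kr(R,b)$; establishing $\Kr(R,b)\subseteq\Kr(R,d)$ honestly requires the Dedekind--Mertens-type cancellation underlying the classical identity, so that step is circular as written. Your parenthetical fallback of simply citing the classical fact $b=d_a$ is precisely what the paper does, so the proof stands with that citation.
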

\begin{proof} Note that $b=d_a$ \cite[Lemma 1]{fp11}, where $d$ is the identity operation on $R$.
\end{proof}

\begin{remark}\label{a}{\em Let $R=\bigoplus_{\alpha\in\Gamma}R_{\alpha}$ be a graded integral domain.
\begin{itemize}
\item[(1)] Let $\bar{R}$ be the integral closure (in its quotient field) of $R$. Then $R^b=\bar{R}$ is a homogeneous overring of $R$ (c.f. \cite[Theorem 2.10]{j83}).
\item[(2)] Let $T$ be a homogeneous overring of $R$, and $b(T):=\wedge_{\Zar(T)}$ be the $b$-operation on $T$. Then it is homogeneous preserving. Therefore $b(T)$ is a homogeneous preserving semistar operation on $R$.
\end{itemize}}
\end{remark}

We recall the notion of $\star$-valuation overring (a notion due essentially to P. Jaffard \cite[page 46]{Jaf}). For a domain $D$ and a semistar operation $\star$ on $D$, we say that a valuation overring $V$ of $D$ is a \emph{$\star$-valuation overring of $D$} provided $F^{\star}\subseteq FV$, for each $F\in f(D)$. As a graded analog we say that a gr-valuation overring $V$ of $R=\bigoplus_{\alpha\in\Gamma}R_{\alpha}$ is a \emph{gr-$\star$-valuation overring of $R$} provided $F^{\star}\subseteq FV$, for each $F\in f_h(R)$. In particular, each gr-valuation overring of $R$ is a gr-$b_R$-valuation overring.

Now using Proposition \ref{b2}, we can prove the following result, which is the graded analogue of \cite[Proposition 3.3]{FL1}.

\begin{proposition}\label{v} Let $\star$ be a homogeneous preserving semistar operation on $R=\bigoplus_{\alpha\in\Gamma}R_{\alpha}$ and $V$ be a gr-valuation overring of $R$.  Then $V$ is a gr-$\star$-valuation overring of $R$ if and only if $V$ is a gr-$\star_a$-valuation overring of $R$.
\end{proposition}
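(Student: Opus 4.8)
The plan is to adapt the proof of \cite[Proposition 3.3]{FL1} to the graded setting, replacing finitely generated fractional ideals by finitely generated \emph{homogeneous} fractional ideals and invoking the explicit description of $\star_a$ on $f_h(R)$ supplied by Proposition \ref{b2}(3). Two elementary facts will be used throughout. First, for every $F\in f_h(R)$ one has $F^{\star}=F^{\star_f}\subseteq F^{\star_a}$: the inclusion $F\subseteq F$ gives $F^{\star}\subseteq F^{\star_f}$, the reverse inclusion is immediate, and $\star_f\leq\star_a$ by Example \ref{ex}(j). Second, in any gr-valuation domain $W$ every finitely generated homogeneous fractional ideal is principal, generated by a homogeneous element: such an ideal is generated by finitely many homogeneous elements, and for any two nonzero homogeneous elements $a,b$ of $W$ one of the homogeneous elements $a/b$, $b/a$ of the homogeneous quotient field lies in $W$, so an immediate induction shows that one generator divides all the others. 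Here one uses that a gr-valuation overring $V$ of $R$ is a homogeneous overring, so $R\subseteq V\subseteq R_H$ and $V$ has homogeneous quotient field $R_H$; thus the gr-valuation dichotomy applies to the ratios occurring below.

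For the ``if'' direction, assume $V$ is a gr-$\star_a$-valuation overring and let $F\in f_h(R)$. Then $F^{\star}\subseteq F^{\star_a}\subseteq FV$ by the first fact and the definition of a gr-$\star_a$-valuation overring, so $V$ is a gr-$\star$-valuation overring.

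For the ``only if'' direction, assume $V$ is a gr-$\star$-valuation overring and fix $F\in f_h(R)$; the goal is $F^{\star_a}\subseteq FV$. By Proposition \ref{b2}(3) we have $F^{\star_a}=\bigcup\{((FH)^{\star}:H^{\star})\mid H\in f_h(R)\}$, so it suffices to take $H\in f_h(R)$ and $x\in((FH)^{\star}:H^{\star})$ and prove $x\in FV$. Since $H\subseteq H^{\star}$ and $xH^{\star}\subseteq(FH)^{\star}$, we get $xH\subseteq(FH)^{\star}\subseteq(FH)V=FHV$, the last inclusion because $V$ is a gr-$\star$-valuation overring applied to the finitely generated homogeneous fractional ideal $FH$. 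By the second fact there are homogeneous elements $h,f'$ with $HV=hV$ and $FHV=f'V$, and then $FHV=h(FV)$. From $xhV\subseteq xHV\subseteq FHV=f'V$ we obtain $x\in(f'h^{-1})V$, while $f'\in f'V=FHV=h(FV)$ forces $f'h^{-1}\in FV$; combining, $x\in(f'h^{-1})V\subseteq FV$. Hence $F^{\star_a}\subseteq FV$ for every $F\in f_h(R)$, i.e., $V$ is a gr-$\star_a$-valuation overring.

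I expect no serious obstacle, since the real content is already packaged into Proposition \ref{b2}(3) (which lets $\star_a$ be tested against homogeneous ideals only) and into the principal-ideal property of finitely generated homogeneous ideals of a gr-valuation domain. The only points needing a little care are the bookkeeping between ideals of $R$ and the homogeneous fractional ideals $HV$, $FHV$ they generate in $V$, and the observation that $((FH)^{\star}:H^{\star})$ and the relevant products all sit inside $R_H$ because $\star$ is homogeneous preserving (cf. \cite[Proposition 2.5]{AA2}); both are routine.
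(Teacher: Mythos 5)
Your proof is correct and follows essentially the same route as the paper's: both directions reduce to Proposition \ref{b2}(3), and the forward implication in each case rests on the fact that $HV$ is principal in $V$, generated by a homogeneous element. Your element-wise bookkeeping with $h$ and $f'$ is just an unwound version of the paper's colon-ideal chain $((FH)^{\star}:H^{\star})=((FH)^{\star}:H)\subseteq(FHV:H)\subseteq(FxV:xR)=FV$.
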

\begin{proof} $(\Rightarrow)$ Assume $V$ is a gr-$\star$-valuation overring of $R$. Let $F, H\in f_h(R)$. There is a homogeneous element $x\in H$ such that $HV=xV$. Then $((FH)^{\star}:H^{\star})=((FH)^{\star}:H)\subseteq(FHV:H)\subseteq(FxV:xR)=(FV:R)=FV$. Thus $F^{\star_a}=F^{\star_{a(h)}}\subseteq FV$ by Proposition \ref{b2}(3).

$(\Leftarrow)$ Since $\star\leq\star_a$, by \cite[Proposition 4.5]{FL2}, if $V$ is a gr-$\star_a$-valuation overring of $R$, then $F^{\star}\subseteq F^{\star_a}\subseteq FV$ for each $F\in f_h(R)$. Hence $V$ is a gr-$\star$-valuation overring of $R$.
\end{proof}

The following result is a graded analog of \cite[Theorem 3.5]{FL1}.

\begin{theorem}\label{*val} Let $\star$ be a homogeneous preserving semistar operation on $R=\bigoplus_{\alpha\in\Gamma}R_{\alpha}$. Then $V$ is a gr-$\star$-valuation overring of $R$ if and only if there exists a valuation overring $W$ of $\KR(R,\star_a)$ such that $V=\bigoplus_{\alpha\in\langle\Gamma\rangle}(W\cap (R_H)_{\alpha})$.
\end{theorem}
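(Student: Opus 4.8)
The plan is to prove the two implications separately, using as the main tools Proposition \ref{Krr} (the content description of $\KR(R,-)$ for graded-\texttt{e.a.b.} operations and the identity $F^{\star}=F\KR(R,\star)\cap R_H$ on $f_h(R)$), Proposition \ref{v} (gr-$\star$-valuation $=$ gr-$\star_a$-valuation), Corollary \ref{c} (so that $F^{\star_a}$ is a homogeneous fractional ideal), and the structural facts about a gr-valuation domain $V$ recalled in the introduction: $V$ has a valuation overring $V_{M_V}$ with $V_{M_V}\cap R_H=V$, and $\KR(V,d_V)=V_{M_V}(X)$ with $V_{M_V}(X)\cap K=V_{M_V}$ and quotient field $K(X)$.

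For the implication $(\Leftarrow)$ I would start from a valuation overring $W$ of $\KR(R,\star_a)$, put $V:=\bigoplus_{\alpha\in\langle\Gamma\rangle}(W\cap(R_H)_{\alpha})$, and check three things. First, since $W$ is a ring and $R_H$ is $\langle\Gamma\rangle$-graded, $V$ is a $\langle\Gamma\rangle$-graded subring of $R_H$, and from $R\subseteq\KR(R,\star_a)\subseteq W$ and $R\subseteq R_H$ we get $R\subseteq V$, so $V$ is a homogeneous overring of $R$. Second, if $0\neq x\in R_H$ is homogeneous, then $x$ and $x^{-1}$ are both homogeneous, so $x\in V\Leftrightarrow x\in W$ and $x^{-1}\in V\Leftrightarrow x^{-1}\in W$; as $W$ is a valuation ring one of these holds, hence $V$ is a gr-valuation overring of $R$. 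Third, by Proposition \ref{v} it suffices to show $F^{\star_a}\subseteq FV$ for each $F\in f_h(R)$, and after replacing $F$ by $sF$ for a suitable $s\in H$ we may assume $F=(a_1,\dots,a_n)$ with the $a_i$ homogeneous; since $F^{\star_a}$ is a homogeneous fractional ideal by Corollary \ref{c}, it is enough to treat a (nonzero) homogeneous $z\in F^{\star_a}$. By Proposition \ref{Krr}(3), $z\in F\KR(R,\star_a)\cap R_H\subseteq FW$, and since $W$ is a valuation ring and $F$ is finitely generated, $FW=a_jW$ for some $j$; thus $z/a_j\in W$, and $z/a_j$ is homogeneous, so $z/a_j\in W\cap(R_H)_{\beta}\subseteq V$ (with $\beta=\deg z-\deg a_j$), whence $z=a_j(z/a_j)\in FV$.

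For the implication $(\Rightarrow)$ I would take a gr-$\star$-valuation overring $V$, which by Proposition \ref{v} is gr-$\star_a$-valuation, so $\A_g^{\star_a}\subseteq\A_gV$ for every $0\neq g\in R[X]$ (note $\A_g\in f_h(R)$). Set $W:=V_{M_V}(X)=\KR(V,d_V)$, a valuation ring with quotient field $K(X)$. Given $\frac{f}{g}\in\KR(R,\star_a)$, Proposition \ref{Krr} (applicable since $\star_a$ is graded-\texttt{e.a.b.}) gives $\A_f\subseteq\A_g^{\star_a}$, hence $\A_fV\subseteq\A_g^{\star_a}V\subseteq\A_gV$; since the homogeneous components of the coefficients of $f$ are unchanged when $f$ is viewed in $V[X]$, this inclusion reads $\A_f^V\subseteq\A_g^V$, so taking $h=1$ in the definition of $\KR(V,d_V)$ gives $\frac{f}{g}\in\KR(V,d_V)=W$. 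Therefore $W$ is a valuation overring of $\KR(R,\star_a)$. Finally $W\cap K=V_{M_V}(X)\cap K=V_{M_V}$, so for each $\alpha$ one has $W\cap(R_H)_{\alpha}=(W\cap K)\cap(R_H)_{\alpha}=(V_{M_V}\cap R_H)\cap(R_H)_{\alpha}=V\cap(R_H)_{\alpha}$, and summing over $\alpha$ gives $\bigoplus_{\alpha}(W\cap(R_H)_{\alpha})=V$, as $V$ is a homogeneous overring.

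The step I expect to need the most care is the reduction to homogeneous elements in $(\Leftarrow)$: one must make sure that $F^{\star_a}$ and $FV$ are genuinely homogeneous fractional ideals, so that verifying $F^{\star_a}\subseteq FV$ on homogeneous elements is legitimate, and one must work with the homogeneous content $\A_f$ and with gr-valuation overrings throughout — which is precisely what lets the argument go through even though $\KR(R,\star)\neq\Kr(R,\star)$ in general.
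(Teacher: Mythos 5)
Your proof is correct, and your forward implication is essentially the paper's argument verbatim: take $W:=V_{M_V}(X)$ and use $\mathcal{A}_f\subseteq\mathcal{A}_g^{\star_a}\subseteq\mathcal{A}_gV$ to conclude $\frac{f}{g}\in\KR(V,d_V)=V_{M_V}(X)=W$, then intersect with $K$ to recover $V$. In the reverse implication you take a genuinely different, though closely related, route. The paper fixes $F=(a_0,\dots,a_n)\in f_h(R)$ and a homogeneous $u\in F^{\star}$, forms the polynomial $f=a_0+a_1X+\cdots+a_nX^n$, observes $\mathcal{A}_u^{\star}\subseteq\mathcal{A}_f^{\star}$ so that $\frac{u}{f}\in\KR(R,\star)\subseteq W$, and then runs the valuation inequality $0\le w(u)-w(f)\le w(u)-\inf_i w(a_i)$ to extract $\frac{u}{a_i}\in W\cap(R_H)_{\alpha}\subseteq V$; this establishes $F^{\star}\subseteq FV$ directly, without passing through $\star_a$, Corollary \ref{c}, or the content formula. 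You instead invoke Proposition \ref{Krr}(3) to get $F^{\star_a}\subseteq F\KR(R,\star_a)\subseteq FW$, use principality of finitely generated ideals in the valuation ring $W$ to write $FW=a_jW$, and finish with Proposition \ref{v}; this is shorter and cleaner, at the cost of leaning on Corollary \ref{c} (needed to reduce to homogeneous elements of $F^{\star_a}$) and on the already-proved identity $F^{\star_a}=F\KR(R,\star_a)\cap R_H$, whereas the paper's computation is self-contained at that point. A small bonus of your write-up is that you explicitly verify that $V=\bigoplus_{\alpha}(W\cap(R_H)_{\alpha})$ is indeed a gr-valuation homogeneous overring of $R$, a point the paper's proof leaves implicit.
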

\begin{proof} $(\Rightarrow)$ Assume that $V$ is a gr-$\star$-valuation overring of $R$ with homogeneous maximal ideal $M_V$. Let $W:=V_{M_V}(X)$ be the trivial extension of the valuation domain $V_{M_V}$. We show that $W$ is a valuation overring of $\KR(R,\star_a)$ such that $\bigoplus_{\alpha\in\langle\Gamma\rangle}(W\cap (R_H)_{\alpha})=\bigoplus_{\alpha\in\langle\Gamma\rangle}(V_{M_V}(X)\cap (R_H)_{\alpha})=\bigoplus_{\alpha\in\langle\Gamma\rangle}(V\cap (R_H)_{\alpha})=V$. Assume that $\frac{f}{g}\in\KR(R,\star_a)$. Then, we obtain that $\mathcal{A}_f\subseteq\mathcal{A}_g^{\star_a}\subseteq\mathcal{A}_gV$, since $V$ is a gr-$\star_a$-valuation overring of $R$ by Proposition \ref{v}. Hence $\frac{f}{g}\in\KR(V,\wedge_{\{V\}})=\KR(V,d_V)=V[X]_{M_V[X]}=V_{M_V}(X)=W$, where the second equality comes from \cite[Corollary 3.8]{s18}.

$(\Leftarrow)$ Assume that $W$ is a valuation overring of $\KR(R,\star)$ such that $\bigoplus_{\alpha\in\langle\Gamma\rangle}(W\cap (R_H)_{\alpha})=V$. Let $w:K(X)\to G\cup\{\infty\}$ be the valuation, associated to the valuation ring $W$. Now let $F=(a_0,\ldots,a_n)$ be a homogeneous finitely generated fractional ideal of $R$, $$f(X):=a_0+a_1X+\cdots+a_nX^n\in R_H[X],$$ and $u\in F^\star$ be a homogeneous element. Then $\mathcal{A}_u^\star=(uR)^\star\subseteq F^\star=\mathcal{A}_f^\star$, and hence we obtain that $\frac{u}{f}\in\KR(R,\star)\subseteq W$. So that $$0\leq w(\frac{u}{f})=w(u)-w(f)\leq w(u)-\inf\{w(a_0),\ldots,w(a_n)\}.$$ Assume that $\inf\{w(a_0),\ldots,w(a_n)\}=w(a_i)$ for some $0\leq i\leq n$. Then $\frac{u}{a_i}\in W\cap (R_H)_{\alpha}\subseteq V$ where $\alpha=\deg(\frac{u}{a_i})=\deg(u)-\deg{a_i}$. Hence $$u\in a_iV\subseteq(a_0,\ldots,a_n)V=FV,$$ that is $F^{\star}\subseteq FV$.
\end{proof}

\begin{remark}{\em Let $\star$ be a homogeneous preserving semistar operation on $R=\bigoplus_{\alpha\in\Gamma}R_{\alpha}$, $\mathcal{W}=\Zar(\KR(R,\star_a))$, and $\mathcal{V}=\{\bigoplus_{\alpha\in\langle\Gamma\rangle}(W\cap (R_H)_{\alpha})\mid W\in \mathcal{W}\}$. By Theorem \ref{*val} there is a bijection $\mathcal{V}\to \mathcal{W}$, $V\mapsto V_{M_V}(X)$. Using Lemma \ref{cap}, we have $$\KR(R,\wedge_{\mathcal{V}})=\bigcap_{V\in \mathcal{V}}\KR(R,\wedge_{\{V\}})=\bigcap_{V\in \mathcal{V}}V_{M_V}(X)=\bigcap_{W\in\mathcal{W}}W=\KR(R,\star_a).$$ So that by Proposition \ref{Krr}, we have $F^{\star_a}=F^{\wedge_{\mathcal{V}}}$ for each $F\in f_h(R)$.
}
\end{remark}

Denote by $\Zar_h(R)$ the set of all gr-valuation overrings of $R$. We can endow $\Zar_h(R)$ with a Zariski topology, i.e., the topology having, as subbasic open subspaces, the subsets $\Zar_h(R[u])$ for $u$ varying in homogeneous elements of $R_H$.

\begin{proposition} The natural map $\psi:\Zar_h(R)\to\Zar(R), V\mapsto V_{M_V}$ is a topological embedding, where $M_V$ is the homogeneous maximal ideal of $V$.
\end{proposition}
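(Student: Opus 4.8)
The plan is to check that $\psi$ has the four features that together constitute a topological embedding: (a) it is well defined, i.e.\ $V_{M_V}$ is a valuation overring of $R$; (b) it is injective; (c) it is continuous; (d) it is open onto its image. For (a) and (b) I only need the facts about gr-valuation domains recalled in Section~1: a gr-valuation overring $V$ of $R$ has a unique homogeneous maximal ideal $M_V$, the localization $V_{M_V}$ is a valuation domain, and $V_{M_V}\cap R_H=V$. Since $R\subseteq V\subseteq V_{M_V}\subseteq K$, the ring $V_{M_V}$ is a valuation overring of $R$, so $\psi$ does map into $\Zar(R)$; and if $V_{M_V}=W_{M_W}$, then $V=V_{M_V}\cap R_H=W_{M_W}\cap R_H=W$, so $\psi$ is injective. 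I will also use freely that $\Zar_h(R[u])=\{V\in\Zar_h(R)\mid u\in V\}$ for a homogeneous $u\in R_H$, and that $\Zar(R[x])=\{W\in\Zar(R)\mid x\in W\}$ for $x\in K$.

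The crucial ingredient is a computation of the valuation on sums of homogeneous elements; for $V\in\Zar_h(R)$ write $w$ for the valuation associated with the valuation ring $V_{M_V}$. Claim: if $a=a_{\alpha_1}+\cdots+a_{\alpha_n}\in R_H$ is written as a sum of nonzero homogeneous elements of pairwise distinct degrees $\alpha_i$, then $w(a)=\min_i w(a_{\alpha_i})$. To prove it, choose $\alpha_1$ realizing the minimum and factor $a=a_{\alpha_1}(1+b)$ with $b=\sum_{i\ge 2}a_{\alpha_i}a_{\alpha_1}^{-1}$ (recall the nonzero homogeneous elements of $R_H$ are units of $R_H$). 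Each summand of $b$ is a nonzero homogeneous element of $R_H$ of nonzero degree and of value $w(a_{\alpha_i})-w(a_{\alpha_1})\ge 0$, so $w(b)\ge 0$. If $w(b)>0$, then $w(1+b)=\min\{w(1),w(b)\}=0$ since $w(1)=0\ne w(b)$. If $w(b)=0$, then $b\in V_{M_V}\cap R_H=V$; since every summand of $b$ has nonzero degree, the degree-zero homogeneous component of $1+b$ is $1$, and as $1\notin M_V$ while $M_V$ is a homogeneous ideal, we get $1+b\notin M_V$, hence $1+b$ is a unit of $V_{M_V}$ and again $w(1+b)=0$. In either case $w(a)=w(a_{\alpha_1})=\min_i w(a_{\alpha_i})$.

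Now continuity. It suffices to show $\psi^{-1}(\Zar(R[x]))$ is open for each $x\in K$; we may assume $x\ne 0$, and since $R\subseteq R_H\subseteq K$ forces $K=\operatorname{qf}(R_H)$ we may write $x=a/b$ with $a,b\in R_H\setminus\{0\}$, with homogeneous decompositions $a=\sum_i a_{\alpha_i}$ and $b=\sum_j b_{\beta_j}$. For $V\in\Zar_h(R)$ we have $x\in V_{M_V}$ iff $w(a)\ge w(b)$, which by the Claim means $\min_i w(a_{\alpha_i})\ge\min_j w(b_{\beta_j})$, equivalently: for every $i$ there is a $j$ with $w(a_{\alpha_i})\ge w(b_{\beta_j})$. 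Since $a_{\alpha_i}b_{\beta_j}^{-1}$ is a homogeneous element of $R_H$, the inequality $w(a_{\alpha_i})\ge w(b_{\beta_j})$ is equivalent to $a_{\alpha_i}b_{\beta_j}^{-1}\in V_{M_V}\cap R_H=V$, i.e.\ to $V\in\Zar_h(R[a_{\alpha_i}b_{\beta_j}^{-1}])$. Hence
\[
\psi^{-1}\bigl(\Zar(R[x])\bigr)=\bigcap_i\bigcup_j\Zar_h\bigl(R[a_{\alpha_i}b_{\beta_j}^{-1}]\bigr),
\]
a finite intersection of finite unions of subbasic open sets, so it is open and $\psi$ is continuous.

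Finally, openness onto the image: for a homogeneous $u\in R_H$ I claim $\psi(\Zar_h(R[u]))=\psi(\Zar_h(R))\cap\Zar(R[u])$. Indeed $u\in V$ implies $u\in V_{M_V}$, which gives the inclusion ``$\subseteq$''; conversely, if $u\in V_{M_V}$ for some $V\in\Zar_h(R)$ then $u\in V_{M_V}\cap R_H=V$, which gives ``$\supseteq$''. Since $\psi$ is injective it commutes with finite intersections and with arbitrary unions, so the $\psi$-image of every open subset of $\Zar_h(R)$ is open in the subspace $\psi(\Zar_h(R))$; combined with continuity this makes $\psi$ a topological embedding. The step I expect to be the real obstacle is the Claim of the second paragraph: reducing the continuity statement to the homogeneous subbasis forces one to control $w$ on non-homogeneous elements, and what makes this work is that the lowest-degree cancellation $a_{\alpha_i}a_{\alpha_1}^{-1}\equiv -1\pmod{M_VV_{M_V}}$ is impossible, precisely because $M_V$ is a homogeneous ideal and therefore cannot contain the degree-zero component $1$ of $1+b$.
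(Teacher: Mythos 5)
Your proof is correct and follows essentially the same route as the paper: continuity is obtained by rewriting $\psi^{-1}(\Zar(R[a/b]))$ in terms of the homogeneous components of $a$ and $b$ (your $\bigcap_i\bigcup_j\Zar_h(R[a_{\alpha_i}b_{\beta_j}^{-1}])$ is just a repackaging of the paper's union over pairs $(i,j)$), and openness onto the image follows from the identity $\psi(\Zar_h(R[u]))=\psi(\Zar_h(R))\cap\Zar(R[u])$ for homogeneous $u$. The only real difference is that you explicitly prove the key fact $w(a)=\min_i w(a_{\alpha_i})$ (via the factorization $a=a_{\alpha_1}(1+b)$ and the homogeneity of $M_V$) and the injectivity of $\psi$, both of which the paper uses without comment; these additions are welcome but do not change the argument.
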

\begin{proof} First, we show that $\psi$ is continuous. Let $\Zar(R[x])$ be a subbasic open subspace of $\Zar(R)$ for $x:=\frac{a}{b}\in K$. Let $a=\sum_{i=1}^{k}a_{i}$ and $b=\sum_{j=1}^{\ell}b_{j}$ be the decomposition of $a$ and $b$ into homogeneous components. Then we have the equalities
\begin{align*}
\psi^{-1}(\Zar(R[x]))=&\{V\in\Zar_h(R)\mid x\in V_{M_V}\}\\
=&\{V\in\Zar_h(R)\mid \inf\{v(a_i)\}\geq\inf\{v(b_j)\}\}\\
    =&\bigcup_{i,j}\{V\in\Zar_h(R)\mid v(a_{i})\leq v(a_{\lambda})\forall\lambda,v(b_{j})\leq v(b_{\mu})\forall\mu,v(b_{j})\leq v(a_{i})\} \\
    =&\bigcup_{i,j}\Zar_h\left(R[\{\frac{a_{\lambda}}{a_{i}}\mid\forall\lambda\}\cup\{\frac{b_{\mu}}{b_{j}}\mid\forall\mu\}\cup\{\frac{a_{i}}{b_{j}}\}]\right),
\end{align*}
which is an open subset of $\Zar_h(R)$, where $v$ is the valuation associated to the valuation domain $V_{M_V}$, thus $\psi$ is continuous.

Finally we show that the image via $\psi$ of an open set of $\Zar_h(R)$ is open in $\psi(\Zar_h(R))$. This is true since $\psi(\Zar_h(R[u]))=\psi(\Zar_h(R))\cap\Zar(R[u])$, for each homogeneous element $u\in R_H$.
\end{proof}

\begin{remark} Let $R=\bigoplus_{\alpha\in\Gamma}R_{\alpha}$ be a graded integral domain. The map $$\varphi:\Zar(\KR(R,b))\to\Zar_h(R), W\mapsto \bigoplus_{\alpha\in\langle\Gamma\rangle}(W\cap (R_H)_{\alpha})$$ is a bijection by Theorem \ref{*val}, with inverse given by $V\mapsto V_{M_V}(X)$, where $M_V$ is the homogeneous maximal ideal of $V$.
\end{remark}

The following result is a graded analogue of \cite[Lemma 1]{df86}.

\begin{theorem}\label{kr} The canonical bijection $\varphi:\Zar(\KR(R,b))\to\Zar_h(R)$, $W\mapsto \bigoplus_{\alpha\in\langle\Gamma\rangle}(W\cap (R_H)_{\alpha})$ is a homeomorphism, when they are endowed with the Zariski topologies.
\end{theorem}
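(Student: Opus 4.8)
The plan is to show that the already-established bijection $\varphi$ is both continuous and open, where the topologies on both spaces are generated by the specified subbasic open sets. Since $\varphi$ is a bijection, it suffices to match up subbasic opens in both directions. On $\Zar(\KR(R,b))$ the subbasic opens are $\Zar(\KR(R,b)[\xi])$ for $\xi\in K(X)$; on $\Zar_h(R)$ they are $\Zar_h(R[u])$ for $u$ a homogeneous element of $R_H$. The key reduction is that it is enough to handle a cofinal family of subbasic opens, and for the Zariski topology on $\Zar(\KR(R,b))$ one may restrict attention to $\xi\in\KR(R,b)$, or even to $\xi=f/g$ with $f,g\in R[X]$, $g\neq0$, since every element of $K(X)$ is a ratio of two such and $\Zar(S[a/b])=\Zar(S[a/b,b/a])\cup\Zar(S[b/a])$-type manipulations reduce finite intersections to this form (exactly as in the proof of the previous proposition).

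First I would compute $\varphi^{-1}(\Zar_h(R[u]))$ for a homogeneous $u=a/b\in R_H$ with $a,b\in H$. By the remark preceding the theorem, $\varphi^{-1}(V)=V_{M_V}(X)$, so $\varphi^{-1}(\Zar_h(R[u]))=\{W\in\Zar(\KR(R,b))\mid u\in \big(\bigoplus_\alpha(W\cap(R_H)_\alpha)\big)\}$. Because $u$ is homogeneous, membership of $u$ in the graded overring $\bigoplus_\alpha(W\cap(R_H)_\alpha)$ is equivalent to $u\in W$ (the homogeneous component in degree $\deg u$ of $W$ is exactly $W\cap(R_H)_{\deg u}$, and $u$ lies in that component iff $u\in W$). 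Hence $\varphi^{-1}(\Zar_h(R[u]))=\Zar(\KR(R,b))\cap\Zar(\KR(R,b)[u])=\Zar(\KR(R,b)[u])$, an open subset; this gives continuity of $\varphi$.

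For openness, I would compute $\varphi(\Zar(\KR(R,b)[f/g]))$ for $f,g\in R[X]$, $g\neq0$. Writing $W\supseteq\KR(R,b)[f/g]$ and passing through the bijection $W=V_{M_V}(X)$ with $v$ the valuation on $K(X)$ attached to $V_{M_V}$, one has $f/g\in W$ iff $w(f)\geq w(g)$, and since $\KR(R,b)=\Kr(R,b)$-style content computations give $w(f)=\inf_i w(a_i)$, $w(g)=\inf_j w(b_j)$ where $a_i,b_j$ run over the homogeneous components of the coefficients of $f$ and $g$ (here one uses Proposition \ref{Krr}(3) together with the fact that for a gr-valuation overring $V$ one has $\mathcal{A}_fV$ principal generated by the minimal-value component — essentially $V$ being a graded-\texttt{e.a.b.} valuation), the condition $w(f)\geq w(g)$ unwinds to $\inf_i v(a_i)\geq\inf_j v(b_j)$. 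This is exactly the condition that appeared in the previous proposition, so the same casework over pairs $(i,j)$ expresses $\varphi(\Zar(\KR(R,b)[f/g]))$ as a finite union of sets $\Zar_h\big(R[\{a_\lambda/a_i\}\cup\{b_\mu/b_j\}\cup\{a_i/b_j\}]\big)$, which is open in $\Zar_h(R)$. Combining, $\varphi$ is a homeomorphism.

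The main obstacle I anticipate is the bookkeeping in the openness direction: one must justify carefully that passing to $V_{M_V}(X)$ the valuation of a polynomial $f\in R[X]$ equals the infimum of the valuations of the homogeneous components of its coefficients (not merely of its coefficients), which requires combining the Gaussian/Dedekind--Mertens behavior of the homogeneous content $\mathcal{A}_f$ (cited as \cite[Proposition 2.1]{s18}) with the structure of gr-valuation domains ($\mathcal{A}_fV$ is principal, generated by a component of minimal value), and then tracking the finite-union decomposition through $\varphi$. The continuity direction is comparatively routine once one observes that homogeneity of $u$ collapses ``$u\in\bigoplus_\alpha(W\cap(R_H)_\alpha)$'' to ``$u\in W$''. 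It is also worth remarking that $b$ is homogeneous preserving (established above), so Theorem \ref{*val} applies with $\star=b$ and $\star_a=b$, legitimizing all the uses of $\KR(R,b)$ and the bijection $\varphi$.
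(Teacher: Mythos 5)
Your proposal is correct and follows essentially the same route as the paper's proof: continuity via the identity $\varphi^{-1}(\Zar_h(R[u]))=\Zar(\KR(R,b)[u])$ for homogeneous $u$, and openness by writing $\alpha=f/g$ with $f,g\in R[X]$, decomposing the coefficients into homogeneous components, and expressing the image as a finite union of subbasic opens of $\Zar_h(R)$ indexed by the pair of components achieving the two infima. The technical point you flag --- that for $W=V_{M_V}(X)$ the value of a polynomial equals the infimum of the values of the homogeneous components of its coefficients --- is used (tacitly) in the paper's computation as well, so your bookkeeping matches theirs.
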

\begin{proof} Let $\Zar_h(R[u])$ be a subbasic open subspace of $\Zar_h(R)$ for homogeneous element $u\in R_H$. Then we have the equality $$\varphi^{-1}(\Zar_h(R[u]))=\Zar(\KR(R,b)[u]).$$ It follows that $\varphi$ is continuous. Since $\varphi$ is a bijection, it now suffices to show that $\varphi$ is an open map. Let $\Zar(\KR(R,b)[\alpha])$ be a subbasic open subspace of $\Zar(\KR(R,b))$ for $0\neq\alpha\in K(X)$. Write $\alpha=\frac{\sum_{i=0}^{n}a_iX^i}{\sum_{i=0}^{m}b_iX^i}$, for $a_i,b_i\in R$. Let $a_i=\sum_{j=1}^{k_i}a_{ij}$ and $b_i=\sum_{j=1}^{\ell_i}b_{ij}$ be the decomposition of $a_i$ and $b_i$ into homogeneous components. For a gr-valuation overring $V$ with homogeneous maximal ideal $M_V$, let $v$ be the valuation on $K$, having valuation ring $V_{M_V}$. Then
\begin{align*}
    &\text{ }\varphi(\Zar(\KR(R,b)[\alpha]))=\{V\in\Zar_h(R)\mid\inf\{v(a_{ij})\}\geq\inf\{v(b_{ij})\}\}\\
    =&\bigcup_{p,q}\{V\in\Zar_h(R)\mid v(a_{ip})\leq v(a_{i\lambda})\forall\lambda,v(b_{iq})\leq v(b_{i\mu})\forall\mu,v(b_{iq})\leq v(a_{ip})\} \\
    =&\bigcup_{p,q}\Zar_h\left(R[\{\frac{a_{i\lambda}}{a_{ip}}\mid\forall\lambda\}\cup\{\frac{b_{i\mu}}{b_{iq}}\mid\forall\mu\}\cup\{\frac{a_{ip}}{b_{iq}}\}]\right),
\end{align*}
which is an open subset of $\Zar_h(R)$ completing the proof.
\end{proof}

A topological space $X$ is called a \emph{spectral} space if $X$ is quasi-compact (i.e. every open cover has a finite subcover) and $T_0$, the quasi-compact open subsets of $X$ are closed under finite intersection and form an open basis, and every nonempty irreducible closed subset of $X$ has a generic point (i.e. it is the closure of a unique point). By a theorem of Hochster, these are precisely the topological spaces which arise as the prime spectrum of a commutative ring endowed with the Zariski topology \cite{h69}. It is known that $\Zar(R)$ endowed with Zariski topology is a spectral space (\cite[Theorem 2]{df86} and \cite[Corollary 3.6]{ffl13}).

\begin{corollary} Let $R=\bigoplus_{\alpha\in\Gamma}R_{\alpha}$ be a graded integral domain. Then $\Zar_h(R)$ is homeomorphic to $\Spec(\KR(R,b))$. In particular $\Zar_h(R)$ is a spectral space.
\end{corollary}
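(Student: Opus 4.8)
The plan is to deduce the corollary directly from the preceding results, since almost all the work has already been done. First I would recall that $\KR(R,b)$ is a B\'ezout domain by Proposition \ref{Krr}(2) (with $\star=b$, which is graded-\texttt{e.a.b.} since it is \texttt{e.a.b.}), and in particular it is a Pr\"ufer domain. For a Pr\"ufer domain $D$, every overring is an intersection of localizations at primes, each localization at a prime is a valuation ring, and the assignment $\fp\mapsto D_{\fp}$ gives an order-reversing bijection between $\Spec(D)$ and $\Zar(D)$; moreover this bijection is a homeomorphism for the respective Zariski topologies. This is classical (see e.g. \cite[Theorem 26.1]{g72} and the discussion of the Zariski topology on $\Zar(D)$ in \cite{df86, ffl13}). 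Hence $\Spec(\KR(R,b))$ is homeomorphic to $\Zar(\KR(R,b))$.

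Next I would invoke Theorem \ref{kr}, which furnishes a homeomorphism $\varphi:\Zar(\KR(R,b))\to\Zar_h(R)$ (the relevant gr-valuation overrings arise as $\bigoplus_{\alpha\in\langle\Gamma\rangle}(W\cap(R_H)_{\alpha})$ for $W$ a valuation overring of $\KR(R,b)$). Composing the two homeomorphisms gives a homeomorphism $\Spec(\KR(R,b))\cong\Zar_h(R)$, which is the first assertion. For the ``in particular'' part, $\Spec$ of any commutative ring, endowed with the Zariski topology, is a spectral space by Hochster's theorem \cite{h69}; therefore $\Zar_h(R)$, being homeomorphic to $\Spec(\KR(R,b))$, is itself a spectral space. (Alternatively one could cite that $\Zar(D)$ is a spectral space for any domain $D$ \cite[Theorem 2]{df86}, \cite[Corollary 3.6]{ffl13}, and transport this along $\varphi$; either route works, and I would mention both briefly.)

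I do not anticipate a serious obstacle here: the substantive content is entirely contained in Theorem \ref{kr} and in Proposition \ref{Krr}(2). The only point that needs a sentence of care is the identification of $\Spec(\KR(R,b))$ with $\Zar(\KR(R,b))$ as \emph{topological} spaces, not merely as sets — that is, one must note that the bijection $\fp\mapsto \KR(R,b)_{\fp}$ is a homeomorphism when $\Zar(\KR(R,b))$ carries the Zariski topology generated by the sets $\Zar(\KR(R,b)[x])$, $x\in K(X)$. This is standard for Pr\"ufer (indeed B\'ezout) domains, so I would simply cite it rather than reprove it, and then the corollary follows by composition.
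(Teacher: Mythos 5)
Your argument is correct and follows essentially the same route as the paper: the paper likewise notes that $\KR(R,b)$ is a B\'ezout, hence Pr\"ufer, domain by Proposition \ref{Krr}, so that $\Spec(\KR(R,b))$ is homeomorphic to $\Zar(\KR(R,b))$, and then applies Theorem \ref{kr}. Your additional remarks on the explicit homeomorphism $\fp\mapsto \KR(R,b)_{\fp}$ and on Hochster's theorem only make explicit what the paper leaves implicit.
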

\begin{proof} Since $\KR(R,b)$ is a B\'{e}zout domain, hence a Pr\"{u}fer domain by Proposition \ref{Krr}, $\Spec(\KR(R,b))$ is homeomorphic to $\Zar(\KR(R,b))$. Now Theorem \ref{kr} completes the proof.
\end{proof}

\section{The space of homogeneous preserving semistar operations of finite type on a graded domain}

Let $\Gamma$ be a (nonzero) torsionless commutative cancellative monoid, $R=\bigoplus_{\alpha\in\Gamma}R_{\alpha}$ be an integral domain graded by $\Gamma$, and $H$ be the set of nonzero homogeneous elements of $R$.

As in \cite{fs14}, the set $\SStar(R)$ of all semistar operations on $R$ was endowed with a topology (called the Zariski topology) having, as a subbasis of open sets, the sets of the form $$V_E:=\{\star\in\SStar(R)\mid 1\in E^{\star}\},$$ where $E\in\overline{\mathcal{F}}(R)$. This topology makes $\SStar(R)$ into a quasi-compact $T_0$ space. Consider $\SStar_f(R)$ as a topological space endowed with the subspace topology of the Zariski topology of $\SStar(R)$. 
It is shown that the set $\SStar_f(R)$, endowed with the Zariski topology, is a Spectral space \cite[Theorem 2.13]{fs14}.

We consider the set $\SStar_{hp}(R)$ (resp. $\SStar_{f,hp}(R)$) consisting of the set of homogeneous preserving (resp. finite type homogeneous preserving) semistar operations on $R$. The Zariski topology on $\SStar_{hp}(R)$ and $\SStar_{f,hp}(R)$ are just the subspace topology of the Zariski topology on $\SStar(R)$.

\begin{remark}{\em Let $R=\bigoplus_{\alpha\in\Gamma}R_{\alpha}$ be a graded integral domain.
\begin{itemize}
  \item[(a)] Since $\SStar(R)$ is a $T_0$-space by \cite[Proposition 2.4(1)]{fs14}, we have $\SStar_{hp}(R)$ and $\SStar_{f, hp}(R)$ are also $T_0$-spaces.
  \item[(b)] The subbasis of open sets of the Zariski topology on $\SStar_{hp}(R)$, are $V'_E:=V_E\cap\SStar_{hp}(R)$, where $E\in\overline{\mathcal{F}}(R)$.
  \item[(c)] The Zariski topology on $\SStar_{f, hp}(R)$ is determined by the finitely generated fractional ideals of $R$, in the sense that the collection of the sets of the form $W_F:=V_F\cap\SStar_{f, hp}(R)$, where $F$ varies among the finitely generated fractional ideals of $R$, is a subbasis, since there is an equality $$V_E\cap\SStar_{f, hp}(R)=\bigcup\{W_F\mid F\in f(R), F\subseteq E\}.$$
\end{itemize}}
\end{remark}

Recall that the set $\Over(R)$ of all overring of the integral domain $R$ can be endowed with the Zariski topology whose basic open sets are those of the form $B_F:=\Over(R[F])$, where $F$ ranges among the finite subsets of quotient field of $R$ \cite{o10}. Now, denote by $\Over_h(R)$ the set of homogeneous overrings of $R$. The Zariski topology on $\Over_h(R)$ is just the subspace topology of the Zariski topology on $\Over(R)$. For $T\in\Over_h(R)$, we have $\wedge_{\{T\}}\in\SStar_{f, hp}(R)$ such that $F\mapsto FT$ for any $F\in \overline{\mathcal{F}}(R)$, by Example \ref{ex}(e). Thus we can define $\imath:\Over_h(R)\to\SStar_{f, hp}(R)$, $T\mapsto\wedge_{\{T\}}$, which is injective.

\begin{proposition}\label{2.5} Endow $\Over_h(R)$ and $\SStar_{f,hp}(R)$ with Zariski topologies.
\begin{itemize}
  \item[(1)] The natural map $\imath:\Over_h(R)\to\SStar_{f, hp}(R)$, $T\mapsto\wedge_{\{T\}}$, is a topological embedding.
  \item[(2)] The map $\pi:\SStar_{f, hp}(R)\to\Over_h(R)$, defined by $\pi(\star):=R^{\star}$, for $\star\in\SStar_{f, hp}(R)$ is a continuous surjection.
  \item[(3)] $\pi\circ\imath$ is the identity map on $\Over_h(R)$, that is, $\pi$ is a topological retraction.
  \item[(4)] The canonical map $\varphi:\SStar_{hp}(R)\to\SStar_{f,hp}(R)$, $\star\mapsto\star_f$ is a topological retraction.
\end{itemize}
\end{proposition}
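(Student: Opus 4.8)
The plan is to check each part against the subbases of the three topologies involved: for $\SStar_{f,hp}(R)$ the sets $W_F=V_F\cap\SStar_{f,hp}(R)$ with $F\in f(R)$; for $\SStar_{hp}(R)$ the sets $V'_E=V_E\cap\SStar_{hp}(R)$ with $E\in\overline{\mathcal{F}}(R)$; and for $\Over_h(R)$ the sets $\Over(R[G])\cap\Over_h(R)$ with $G$ a finite subset of $K$. Everything rests on one elementary remark: for $T\in\Over_h(R)$ and $0\neq x\in K$ one has
$$x\in T\iff 1\in (x^{-1}R)T=(x^{-1}R)^{\wedge_{\{T\}}},$$
so that, for a finite $G=\{x_1,\dots,x_k\}\subseteq K\setminus\{0\}$, the open set $\Over(R[G])\cap\Over_h(R)$ of $\Over_h(R)$ corresponds under $\imath$ to $W_{x_1^{-1}R}\cap\dots\cap W_{x_k^{-1}R}$. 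I will use freely that $R^\star$ is a ring with $R\subseteq R^\star\subseteq R_H$ for $\star\in\SStar_{hp}(R)$ (so $\pi$ is well defined into $\Over_h(R)$), that $\wedge_{\{T\}}\in\SStar_{f,hp}(R)$ for $T\in\Over_h(R)$, and that $\star_f\in\SStar_{f,hp}(R)$ for $\star\in\SStar_{hp}(R)$ by Example \ref{ex}(c).

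I would first dispatch (2) and (3). Continuity of $\pi$ is the identity $\pi^{-1}(\Over(R[G])\cap\Over_h(R))=\bigcap_{x\in G}W_{x^{-1}R}$, immediate from the displayed equivalence applied with $\star$ in place of $\wedge_{\{T\}}$ (i.e.\ $x\in R^\star\iff 1\in(x^{-1}R)^\star$). Next $R^{\wedge_{\{T\}}}=RT=T$ gives $\pi\circ\imath=\mathrm{id}_{\Over_h(R)}$; this yields surjectivity of $\pi$ (hence (2)) and, once $\imath$ is known to be continuous, shows that $\pi$ is a topological retraction admitting $\imath$ as a section (hence (3)), and also that $\imath$ is injective.

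For (1) it then remains to show $\imath$ is continuous and open onto its image. Continuity: $\imath^{-1}(W_F)=\{T\in\Over_h(R)\mid 1\in FT\}$, and this set is open because if $1\in FT_0$, then writing $F=(x_1,\dots,x_n)$ and $1=\sum_i x_i t_i$ with $t_i\in T_0$, the open set $\{T\in\Over_h(R)\mid t_1,\dots,t_n\in T\}=\Over(R[t_1,\dots,t_n])\cap\Over_h(R)$ is a neighbourhood of $T_0$ contained in $\{T\mid 1\in FT\}$. Openness onto the image reduces, using injectivity of $\imath$ to pass from subbasic to basic open sets of $\Over_h(R)$, to the correspondence $\imath(\Over(R[G])\cap\Over_h(R))=(\bigcap_{x\in G}W_{x^{-1}R})\cap\imath(\Over_h(R))$ noted above. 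I expect the continuity of $\imath$ — specifically, seeing that $\{T\mid 1\in FT\}$ is open — to be the only step requiring an idea rather than bookkeeping, and the explicit neighbourhood $\Over(R[t_1,\dots,t_n])\cap\Over_h(R)$ above is that idea.

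For (4): $\varphi$ is well defined by Example \ref{ex}(c). Since $\star$ and $\star_f$ agree on $f(R)$ — indeed $F^{\star_f}=\bigcup\{G^\star\mid G\in f(R),\ G\subseteq F\}=F^\star$ for $F\in f(R)$ by property $\star_2$ — we get, for every $F\in f(R)$,
$$\varphi^{-1}(W_F)=\{\star\in\SStar_{hp}(R)\mid 1\in F^{\star_f}\}=\{\star\in\SStar_{hp}(R)\mid 1\in F^\star\}=V'_F,$$
a subbasic open of $\SStar_{hp}(R)$; hence $\varphi$ is continuous. Finally $\SStar_{f,hp}(R)$ is a subspace of $\SStar_{hp}(R)$, the inclusion is continuous, and it is a section of $\varphi$ because $\star_f=\star$ whenever $\star$ is of finite type; therefore $\varphi$ is a topological retraction of $\SStar_{hp}(R)$ onto $\SStar_{f,hp}(R)$.
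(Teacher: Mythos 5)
Your proof is correct and follows essentially the same route as the paper, whose own proof consists of the single identity $\pi^{-1}(\Over_h(R[u]))=\SStar_{f,hp}(R)\cap W_{u^{-1}R}$ (your equivalence $x\in R^{\star}\Leftrightarrow 1\in(x^{-1}R)^{\star}$) together with citations to the ungraded arguments in \cite{fs14} and \cite{ffs16} for parts (1) and (4). You have merely written out in full the standard bookkeeping that the paper outsources to those references, so nothing further is needed.
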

\begin{proof} Parts (1) and (4) are the same as proofs of \cite[Proposition 2.5, and 2.4(2)]{fs14}. Parts (2) and (3) follows from the equality $$\pi^{-1}(\Over_h(R[u]))=\SStar_{f,hp}(R)\cap W_{u^{-1}R},$$ see \cite[Proposition 2.1]{ffs16}.
\end{proof}

\begin{proposition}\label{2.7} Let $\Delta$ be a quasi-compact subspace of $\SStar_{f, hp}(R)$. Then, the semistar operation $\bigwedge(\Delta)$ belongs to $\SStar_{f, hp}(R)$.
\end{proposition}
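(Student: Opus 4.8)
The plan is to first show that $\sigma:=\bigwedge(\Delta)$ is of finite type, and then that it is homogeneous preserving; the latter is essentially automatic once the former is established, since an infimum of homogeneous preserving semistar operations is homogeneous preserving by Example \ref{ex}(e) (the infimum is computed pointwise by intersection, which preserves homogeneity). So the real content is the finite type assertion, and here I would follow the strategy of \cite[Theorem 2.13]{fs14} adapted to the homogeneous setting.

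First I would recall that $\bigwedge(\Delta)$ acts by $E^{\bigwedge(\Delta)}=\bigcap_{\star\in\Delta}E^{\star}$, and to prove it is of finite type I must show $E^{\bigwedge(\Delta)}=\bigcup\{F^{\bigwedge(\Delta)}\mid F\in f(R),\ F\subseteq E\}$ for every $E\in\overline{\mathcal{F}}(R)$; one inclusion is trivial, so fix $x\in E^{\bigwedge(\Delta)}$, i.e.\ $x\in E^{\star}$ for every $\star\in\Delta$. Since each $\star\in\Delta$ is of finite type, for each such $\star$ there is a finitely generated $F_{\star}\subseteq E$ with $x\in F_{\star}^{\star}$, equivalently $\star\in W_{F_\star x^{-1}R}$ (using the sets $W_F$ from the subbasis of $\SStar_{f,hp}(R)$, after the usual rescaling so that the condition becomes $1\in(x^{-1}F_\star)^\star$). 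Thus $\{W_{x^{-1}F_{\star}R}\cap\Delta\mid \star\in\Delta\}$ is an open cover of $\Delta$, and by quasi-compactness finitely many $F_{\star_1},\dots,F_{\star_n}$ suffice; setting $F:=F_{\star_1}+\cdots+F_{\star_n}\in f(R)$, $F\subseteq E$, we get $x\in F^{\star}$ for every $\star\in\Delta$, hence $x\in F^{\bigwedge(\Delta)}$. This proves $\sigma=\sigma_f$.

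It remains to confirm $\sigma\in\SStar_{hp}(R)$. Each $\star\in\Delta\subseteq\SStar_{f,hp}(R)$ is homogeneous preserving, so for a homogeneous fractional ideal $F$ of $R$ each $F^{\star}$ is homogeneous; since an arbitrary intersection of homogeneous submodules of $R_H$ is again homogeneous, $F^{\sigma}=\bigcap_{\star\in\Delta}F^{\star}$ is homogeneous. (Implicit here is that $\Delta\neq\emptyset$, which I would note at the outset, discarding the trivial case; if the convention allows $\Delta=\emptyset$ then $\bigwedge(\emptyset)$ is the largest semistar operation $F\mapsto R_H$ when restricted appropriately, which is still finite type and homogeneous preserving, so the statement holds either way.) Combining, $\sigma$ is a finite type homogeneous preserving semistar operation, i.e.\ $\sigma\in\SStar_{f,hp}(R)$.

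The main obstacle is the quasi-compactness argument in the finite type step: one must be careful that the rescaling $x\in F^{\star}\iff 1\in(x^{-1}F)^{\star}$ is legitimate (it is, by axiom $\star_1$, and $x^{-1}F\in f(R)$ when $F\in f(R)$), and that the resulting sets genuinely belong to the subbasis $\{W_G\}$ of the subspace topology on $\SStar_{f,hp}(R)$ so that quasi-compactness of $\Delta$ in that subspace topology can be invoked. Everything else is routine, following \cite{fs14} verbatim with $f(R)$ and homogeneous fractional ideals in place of the ungraded analogues.
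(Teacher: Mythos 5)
Your proof is correct and takes essentially the same route as the paper: the paper simply cites \cite[Proposition 2.7]{fs14} for the finite-type step (your ultrafilter-free covering argument with the subbasic sets $W_{x^{-1}F_\star}$ is exactly the proof of that cited proposition, and quasi-compactness transfers verbatim since the subspace topology on $\Delta$ is the same whether computed inside $\SStar_{f,hp}(R)$ or $\SStar_f(R)$), and then invokes Example \ref{ex}(e) for the homogeneity, just as you do via pointwise intersection.
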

\begin{proof} By Proposition \ref{2.5}(1), $\Delta$ is also a quasi-compact subspace of $\SStar_f(R)$. Hence by \cite[Proposition 2.7]{fs14}, we have that $\bigwedge(\Delta)$ is of finite type. Now by Example \ref{ex}(e), $\bigwedge(\Delta)$ is homogeneous preserving, that is $\bigwedge(\Delta)\in\SStar_{f, hp}(R)$.
\end{proof}

\begin{proposition}\label{2.11} Let $\{V'_{E_i}\mid i\in I\}$ be a nonempty family of subbasic open sets of the Zariski topology of $\SStar_{hp}(R)$. The following statements hold.
\begin{itemize}
  \item[(1)] $\bigcap\{V'_{E_i}\mid i\in I\}$ is a complete lattice (as a subset of the partially ordered set $(\SStar_{hp}(R),\leq)$).
  \item[(2)] $\bigcap\{V'_{E_i}\mid i\in I\}$ is a quasi-compact subspace of $\SStar_{hp}(R)$. In particular, $V'_E$ is quasi-compact for any $E\in \overline{\mathcal{F}}(R)$
\end{itemize}
\end{proposition}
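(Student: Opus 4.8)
The plan is to transcribe, with the obvious changes, the argument used in the non-graded setting in \cite{fs14}. Write $\Delta:=\bigcap\{V'_{E_i}\mid i\in I\}$, so that for $\star\in\SStar_{hp}(R)$ one has $\star\in\Delta$ exactly when $1\in E_i^{\star}$ for every $i\in I$. The only input I will need is Example \ref{ex}(e): $\SStar_{hp}(R)$ is closed under the formation of arbitrary nonempty meets $\bigwedge$ and joins $\bigvee$, which realize the infimum and supremum in $(\SStar_{hp}(R),\le)$.

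For (1), the first step is to check that $\Delta$ is closed under $\bigwedge$ and $\bigvee$. If $\emptyset\neq\mathcal{S}\su\Delta$ then $\bigwedge(\mathcal{S}),\bigvee(\mathcal{S})\in\SStar_{hp}(R)$, and for each $i\in I$ we have $E_i^{\bigwedge(\mathcal{S})}=\bigcap\{E_i^{\star}\mid\star\in\mathcal{S}\}\ni 1$ (each $E_i^{\star}$ contains $1$) and $E_i^{\bigvee(\mathcal{S})}\supseteq E_i^{\star}\ni 1$ for any fixed $\star\in\mathcal{S}$; hence $\bigwedge(\mathcal{S}),\bigvee(\mathcal{S})\in\Delta$. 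Assuming $\Delta\neq\emptyset$, apply this with $\mathcal{S}=\Delta$: then $\bigvee(\Delta)$ is the greatest element of $\Delta$ and $\bigwedge(\Delta)$ the least. The second step is purely order-theoretic: a poset with a greatest element in which every nonempty subset has an infimum is a complete lattice — for nonempty $\mathcal{S}\su\Delta$ the infimum is $\bigwedge(\mathcal{S})\in\Delta$ and the supremum is the infimum of the nonempty (it contains $\bigvee(\Delta)$) set of upper bounds of $\mathcal{S}$ in $\Delta$, while for $\mathcal{S}=\emptyset$ the infimum and supremum are $\bigvee(\Delta)$ and $\bigwedge(\Delta)$. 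This yields (1).

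For (2), I would appeal to the Alexander subbase lemma. The subbasic open subsets of $\Delta$, regarded as a subspace of $\SStar_{hp}(R)$ (hence of $\SStar(R)$), are the sets $V_F\cap\Delta$ with $F\in\overline{\mathcal{F}}(R)$, so it suffices to extract a finite subcover from any cover of $\Delta$ of this form. Suppose $\Delta\su\bigcup_{\lambda\in\Lambda}(V_{F_\lambda}\cap\Delta)$ has no finite subcover. Then for every finite $J\su\Lambda$ there is $\star_J\in\Delta$ with $1\notin F_\lambda^{\star_J}$ for all $\lambda\in J$. Put $\sigma:=\bigwedge\{\star_J\mid J\su\Lambda\text{ finite}\}\in\SStar_{hp}(R)$. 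Exactly as in the first step of (1), $1\in E_i^{\sigma}$ for each $i$, so $\sigma\in\Delta$; hence $1\in F_\lambda^{\sigma}$ for some $\lambda\in\Lambda$. But applying the choice to the singleton $J=\{\lambda\}$ gives $1\notin F_\lambda^{\star_J}$, and $\sigma\le\star_J$ forces $F_\lambda^{\sigma}\su F_\lambda^{\star_J}$, so $1\notin F_\lambda^{\sigma}$ — a contradiction. Therefore $\Delta$ is quasi-compact, and the last assertion follows on letting $I$ be a singleton (in particular $\SStar_{hp}(R)=V'_R$ is quasi-compact).

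Once Example \ref{ex}(e) is granted, the argument is essentially bookkeeping; the two places where the graded situation differs from \cite{fs14}, and so require attention, are: one must supply the greatest element of $\Delta$ as $\bigvee(\Delta)$ rather than borrowing a top from the ambient space — the top of $\SStar_{hp}(R)$ need not lie in $\Delta$, unlike the operation $e_\infty$ in the non-graded case, which always lies in $V_E$ — and, relatedly, one should either check that the $V'_{E_i}$ of interest are nonempty or note that the assertions are vacuous when $\Delta=\emptyset$. I do not expect any deeper obstacle.
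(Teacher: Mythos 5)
Your proof is correct and is essentially the argument the paper intends: the paper's own proof simply defers to \cite[Proposition 2.11]{fs14}, and your write-up is a faithful transcription of that argument to the graded setting (closure of $\Delta$ under $\bigwedge$ and $\bigvee$ via Example \ref{ex}(e), then the Alexander subbase lemma using the infimum of the chosen $\star_J$'s). Your observation that the maximum of $\Delta$ must be supplied as $\bigvee(\Delta)$ rather than borrowed from the ambient space — the operation $e_\infty$, $E\mapsto K$, need not be homogeneous preserving, and indeed $V'_E$ may even be empty — is precisely the one point where the adaptation is not purely mechanical, and you handle it correctly.
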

\begin{proof} The proof is the same as proof of \cite[Proposition 2.11]{fs14}.
\end{proof}

\begin{lemma}\label{dd} Let $\emptyset\neq Y\subseteq\SStar_{f, hp}(R)$. Then $\bigvee(Y)\in\SStar_{f, hp}(R)$ and, for any $F\in \overline{\mathcal{F}}(R)$, we have
$$F^{\bigvee(Y)}=\bigcup\{F^{\sigma_1\circ\cdots\circ\sigma_n}\mid\sigma_1,\ldots,\sigma_n\in Y,n\in \mathbb{N}\}.$$
\end{lemma}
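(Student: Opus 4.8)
The plan is to check that the right-hand side of the displayed identity is itself a finite type homogeneous preserving semistar operation, call it $\ast$, and then to recognize $\ast$ as $\bigvee(Y)$ by a formal order-theoretic argument. Here $\sigma_1\circ\cdots\circ\sigma_n$ denotes the composite map $F\mapsto\bigl((\cdots(F^{\sigma_n})\cdots)^{\sigma_2}\bigr)^{\sigma_1}$, which is extensive, order preserving, and satisfies axiom $\star_1$, being assembled from maps with these three properties. The first point to record is that the union $F^{\ast}:=\bigcup\{F^{\sigma_1\circ\cdots\circ\sigma_n}\}$ is \emph{directed}: given two composites $c,c'$ of members of $Y$, the composite $c\circ c'$ is again of this form, and $F^{c'}\su(F^{c'})^{c}=F^{c\circ c'}$ by extensivity of $c$, while $F^{c}\su(F^{c'})^{c}=F^{c\circ c'}$ by order preservation of $c$ applied to $F\su F^{c'}$.

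Granting directedness, axioms $\star_1$, $\star_2$, $\star_3$ for $\ast$ survive passage to the union routinely. For idempotency, $\star_4$ --- the one place where the hypothesis $Y\su\SStar_{f,hp}(R)$ is essential --- I would show $(F^{\ast})^{c'}\su F^{\ast}$ for every composite $c'=\tau_1\circ\cdots\circ\tau_m$ of members of $Y$, by induction on $m$. For $m=1$: since $\tau_1$ is of finite type, $(F^{\ast})^{\tau_1}=\bigcup\{G^{\tau_1}\mid G\in f(R),\ G\su F^{\ast}\}$, and each such finitely generated $G$ lies inside a single term $F^{c_0}$ of the directed union, whence $G^{\tau_1}\su(F^{c_0})^{\tau_1}=F^{\tau_1\circ c_0}\su F^{\ast}$. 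For $m>1$, write $c'=\tau_1\circ c''$; the inductive hypothesis gives $(F^{\ast})^{c''}\su F^{\ast}$, so $(F^{\ast})^{c'}=\bigl((F^{\ast})^{c''}\bigr)^{\tau_1}\su(F^{\ast})^{\tau_1}\su F^{\ast}$. Hence $(F^{\ast})^{\ast}=\bigcup_{c'}(F^{\ast})^{c'}\su F^{\ast}$, and $\ast$ is a semistar operation. A similar peeling argument --- at the $i$-th stage of a composite $\sigma_1\circ\cdots\circ\sigma_n$ replacing a finitely generated module by a finitely generated subideal of the preceding stage, using that $\sigma_i$ is of finite type --- shows $F^{\ast}\su F^{\ast_f}$, so $\ast$ is of finite type.

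Next I would verify that $\ast$ is homogeneous preserving. The key auxiliary fact is that a finite type homogeneous preserving semistar operation $\sigma$ sends a graded $R$-submodule $N$ of $R_H$ lying in $\overline{\mathcal F}(R)$ to a graded submodule of $R_H$: every finitely generated $G\su N$ is contained in a finitely generated homogeneous fractional ideal $G'\su N$ (generate $G'$ by the homogeneous components of a finite generating set of $G$, which all lie in $N$ because $N$ is graded), so by finite type $N^{\sigma}=\bigcup\{(G')^{\sigma}\}$ is a directed union of graded submodules of $R_H$ --- each $(G')^{\sigma}$ being graded since $\sigma$ is homogeneous preserving and contained in $R_H$ since $s(G')^{\sigma}=(sG')^{\sigma}\su R^{\sigma}\su R_H$ for a suitable $s\in H$ --- and hence graded. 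Applying this successively along a composite, for a homogeneous fractional ideal $E$ each $E^{\sigma_1\circ\cdots\circ\sigma_n}$ is a graded submodule of $R_H$; therefore $E^{\ast}$, a union of such modules, contains all homogeneous components of each of its elements and is homogeneous. Thus $\ast\in\SStar_{f,hp}(R)$.

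It remains to identify $\ast$ with $\bigvee(Y)$. Taking $n=1$ shows $\sigma\le\ast$ for every $\sigma\in Y$, so $\ast$ is a homogeneous preserving upper bound of $Y$. On the other hand, for any upper bound $\tau$ of $Y$ in $\SStar(R)$ one has $\sigma_1\circ\cdots\circ\sigma_n\le\tau$ for every composite of members of $Y$: inductively, $F^{\sigma_1\circ\cdots\circ\sigma_n}=(F^{\sigma_2\circ\cdots\circ\sigma_n})^{\sigma_1}\su(F^{\sigma_2\circ\cdots\circ\sigma_n})^{\tau}\su(F^{\tau})^{\tau}=F^{\tau}$, using $\sigma_1\le\tau$ and idempotency of $\tau$; hence $F^{\ast}\su F^{\tau}$ for all $F$, that is $\ast\le\tau$. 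So $\ast$ is the least homogeneous preserving upper bound of $Y$, which by Example \ref{ex}(e) is precisely $\bigvee(Y)$; in particular $\bigvee(Y)=\ast$ is of finite type and homogeneous preserving and $F^{\bigvee(Y)}=F^{\ast}$ is the asserted formula. The delicate points are the $\star_4$ step and the homogeneity of $\ast$: both genuinely use that the members of $Y$ are of finite type --- the former so that a composite applied to $F^{\ast}$ collapses back into the directed union, the latter so that iterating homogeneous preserving operations does not escape the graded submodules of $R_H$.
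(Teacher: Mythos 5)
Your proof is correct, and it is essentially a fully self-contained expansion of what the paper compresses into a citation: the paper's entire argument is to invoke \cite[Lemma 2.12]{fs14} for the finite-type claim and the displayed formula, and then to assert that ``it can be seen'' that $\bigvee(Y)$ is homogeneous preserving. Your reconstruction of the cited lemma (directedness of the union over composites, the collapse of $(F^{\ast})^{c'}$ back into the union via finite type for $\star_4$, the peeling argument for finite type, and the order-theoretic identification of $\ast$ with the least upper bound) matches the standard proof and is sound. What your write-up adds beyond the paper is the one point that genuinely needs care in the graded setting and that the paper leaves implicit: after applying $\sigma_n$ to a homogeneous fractional ideal, the intermediate modules $E^{\sigma_n\circ\cdots}$ are in general only graded $R$-submodules of $R_H$ in $\overline{\mathcal F}(R)$, not fractional ideals, so the definition of ``homogeneous preserving'' cannot be iterated naively; your auxiliary fact --- that a \emph{finite type} homogeneous preserving operation sends any graded submodule of $R_H$ to a graded submodule of $R_H$, via cofinality of the finitely generated homogeneous subideals --- is exactly the right way to fill that gap, and it also makes explicit why the supremum computed in $\SStar(R)$ coincides with the supremum $\bigvee(Y)$ taken in $\SStar_{hp}(R)$ as defined in Example \ref{ex}(e). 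In short: same mathematics, but your version supplies the details the paper outsources or waves at.
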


\begin{proof} It follows from \cite[Lemma 2.12]{fs14} (see also \cite[Page 1628]{aa90} for the case of star operations), that $\bigvee(Y)$ is a semistar operation of finite type, and that for any $F\in \overline{\mathcal{F}}(R)$, $F^{\bigvee(Y)}=\bigcup\{F^{\sigma_1\circ\cdots\circ\sigma_n}\mid\sigma_1,\ldots,\sigma_n\in Y,n\in \mathbb{N}\}$. Now it can be seen that $\bigvee(Y)\in\SStar_{f, hp}(R)$.
\end{proof}

Now we show $\SStar_{f,hp}(R)$ is a spectral space, whose proof is analogues to \cite[Thorem 2.13]{fs14}.

\begin{theorem}\label{spec} The space $\SStar_{f, hp}(R)$ of finite type homogeneous preserving semistar operation on $R$, endowed with the Zariski topology, is a spectral space.
\end{theorem}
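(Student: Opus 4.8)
The plan is to deduce this from the ultrafilter-theoretic characterization of spectral spaces used in the proof of \cite[Theorem 2.13]{fs14}: a topological space $X$ which is $T_0$ with respect to a given subbasis $\mathcal{S}$ of open sets is spectral provided that for every ultrafilter $\mathcal{U}$ on the underlying set $X$ the set
$$X_{\mathcal{U}}:=\{x\in X\mid \text{for all }S\in\mathcal{S},\ x\in S\Leftrightarrow S\in\mathcal{U}\}$$
is nonempty. We already know $\SStar_{f,hp}(R)$ is $T_0$, and by Remark~(c) the sets $W_F=V_F\cap\SStar_{f,hp}(R)$ with $F\in f(R)$ form a subbasis of its Zariski topology, where $W_F=\{\star\in\SStar_{f,hp}(R)\mid 1\in F^{\star}\}$. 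So it suffices to produce, for each ultrafilter $\mathcal{U}$ on $\SStar_{f,hp}(R)$, a point of $(\SStar_{f,hp}(R))_{\mathcal{U}}$.

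First I would fix such a $\mathcal{U}$ and, exactly as in \cite{fs14}, define a map on $\overline{\mathcal{F}}(R)$ by first setting, for $F\in f(R)$,
$$F^{\star_{\mathcal{U}}}:=\{x\in K\mid \{\star\in\SStar_{f,hp}(R)\mid x\in F^{\star}\}\in\mathcal{U}\},$$
and then $E^{\star_{\mathcal{U}}}:=\bigcup\{F^{\star_{\mathcal{U}}}\mid F\in f(R),\ F\subseteq E\}$ for arbitrary $E\in\overline{\mathcal{F}}(R)$. Using the defining properties of an ultrafilter (stability under finite intersections and supersets, and that exactly one of a subset and its complement lies in $\mathcal{U}$), one verifies verbatim as in \cite[Theorem 2.13]{fs14} that $\star_{\mathcal{U}}$ is a semistar operation of finite type on $R$, and that $\star_{\mathcal{U}}\in W_F$ iff $1\in F^{\star_{\mathcal{U}}}$ iff $W_F\in\mathcal{U}$. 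Thus $\star_{\mathcal{U}}$ will be the required point of $(\SStar_{f,hp}(R))_{\mathcal{U}}$ as soon as we know $\star_{\mathcal{U}}$ is homogeneous preserving.

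The only genuinely new point is therefore to check that $\star_{\mathcal{U}}\in\SStar_{hp}(R)$, and here I would reuse the content-ideal trick of Proposition~\ref{pp}. Following that proof it is enough to treat a homogeneous integral ideal $I$ of $R$. Given $x\in I^{\star_{\mathcal{U}}}$, choose $F\in f(R)$ with $F\subseteq I$ and $A:=\{\star\mid x\in F^{\star}\}\in\mathcal{U}$, and let $G$ be the finitely generated homogeneous ideal generated by the homogeneous components of the (finitely many) generators of $F$; then $F\subseteq G\subseteq I$ since $I$ is homogeneous. For each $\star\in A$ the ideal $G^{\star}$ is a homogeneous fractional ideal (as $\star$ is homogeneous preserving and $G$ is homogeneous) and $x\in F^{\star}\subseteq G^{\star}$, so every homogeneous component of $x$ lies in $G^{\star}$. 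Hence, for each homogeneous component $y$ of $x$, $A\subseteq\{\star\mid y\in G^{\star}\}$, so this set lies in $\mathcal{U}$, giving $y\in G^{\star_{\mathcal{U}}}\subseteq I^{\star_{\mathcal{U}}}$ and therefore $C(x)\subseteq I^{\star_{\mathcal{U}}}$; so $I^{\star_{\mathcal{U}}}$ is homogeneous. The passage to homogeneous fractional ideals is then identical to the last two sentences of the proof of Proposition~\ref{pp}, using that $R^{\star_{\mathcal{U}}}\subseteq R_H$ (which holds because each $\star\in\SStar_{f,hp}(R)$ satisfies $R^{\star}\subseteq R_H$, so the defining sets in $\mathcal{U}$ are empty unless the element already lies in $R_H$). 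This shows $\star_{\mathcal{U}}\in\SStar_{f,hp}(R)$, hence $(\SStar_{f,hp}(R))_{\mathcal{U}}\neq\emptyset$, and the criterion yields that $\SStar_{f,hp}(R)$ is spectral.

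I expect the bulk of the work (the semistar axioms and finite type for $\star_{\mathcal{U}}$, and the identification of $\star_{\mathcal{U}}$ as the ultrafilter limit) to be routine and literally the same as in \cite{fs14}; the genuinely graded input is only the short homogeneity verification above. As an alternative route one could instead invoke that $\SStar_f(R)$ is spectral \cite[Theorem 2.13]{fs14} and check that $\SStar_{f,hp}(R)$ is closed in the constructible topology of $\SStar_f(R)$ — i.e., stable under the ultrafilter limits $\star_{\mathcal{U}}$ — which reduces to the same homogeneity computation.
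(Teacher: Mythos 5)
Your proposal is correct, and its overall skeleton (the ultrafilter criterion of \cite[Corollary 3.3]{f14} applied to the subbasis $\{W_F\mid F\in f(R)\}$) matches the paper's, but the witness point is produced by a genuinely different construction. The paper takes $\star:=\bigvee(\{\bigwedge(W_F)\mid W_F\in\mathfrak{U}\})$ and gets membership in $\SStar_{f,hp}(R)$ essentially for free from the closure of this class under infima and suprema (Example \ref{ex}(e), Propositions \ref{2.7} and \ref{2.11}, and Lemma \ref{dd}); the only computation left is the equivalence $\star\in W_F\Leftrightarrow W_F\in\mathfrak{U}$. You instead build the ultrafilter--limit operation $F^{\star_{\mathcal{U}}}=\{x\in K\mid\{\star\mid x\in F^{\star}\}\in\mathcal{U}\}$, for which that equivalence and the finite-type property are built in, and the only genuinely graded input is your content-ideal verification that $\star_{\mathcal{U}}$ is homogeneous preserving --- which is correct as written (the key points, $F\subseteq G\subseteq I$ with $G$ finitely generated homogeneous and $R^{\star_{\mathcal{U}}}\subseteq R_H$, are all justified). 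Since the subbasis separates points in a $T_0$ space, the two constructions necessarily yield the same operation, so this is a matter of packaging: the paper's route leans on the lattice structure of $\SStar_{f,hp}(R)$ already developed, while yours isolates the graded content in one homogeneity computation and makes Remark \ref{cons} (closedness in the constructible topology of $\SStar_f(R)$) immediate. One small correction: the verification that $\star_{\mathcal{U}}$ satisfies the semistar axioms is \emph{not} literally in \cite[Theorem 2.13]{fs14}, whose proof uses the sup-of-infs construction; the ultrafilter-limit operation is the device of \cite{ffs16} (and of \cite{clt} for the spectrum), so you should either cite that source or include the short idempotency check (for $x\in(F^{\star_{\mathcal{U}}})^{\star_{\mathcal{U}}}$, intersect the finitely many defining sets in $\mathcal{U}$ to land in $F^{\star}$ for a single $\star$).
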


\begin{proof} In order to prove that a topological space $X$ is a spectral space, we use the characterization given in \cite[Corollary 3.3]{f14}. We recall that if $\beta$ is a non empty family of subsets of $X$, for a given subset $Y$ of $X$ and an ultrafilter $\mathfrak{U}$ on $Y$, we set $$Y_{\beta}(\mathfrak{U}):=\{x\in X\mid[\text{for each }B\in\beta\text{, it happens that }x\in B\Leftrightarrow B\cap Y\in \mathfrak{U}]\}.$$
By \cite[Corollary 3.3]{f14}, for a topological space $X$ being a spectral space is equivalent to $X$ being a $T_0$-space having a subbasis $\mathcal{S}$ for the open sets such that $X_{\mathcal{S}}(\mathfrak{U})\neq\emptyset$, for every ultrafilter $\mathfrak{U}$ on $X$.

We know $X:=\SStar_{f, hp}(R)$ is a $T_0$-space, and set $\mathcal{S}:=\{W_F\mid F\in f(R)\}$ be the canonical subbasis of the Zariski topology on $X$. Let $\mathfrak{U}$ be an ultrafilter on $X$. It suffices to show the set $$X_{\mathcal{S}}(\mathfrak{U}):=\{\star\in X\mid[\text{for each }W_F\in\mathcal{S}\text{, it happens that } \star\in W_F\Leftrightarrow W_F\in\mathfrak{U}]\}$$ is nonempty. By Propositions \ref{2.7} and \ref{2.11}, any semistar operation of the form $\bigwedge(W_F)$ (where $F\in f(R)$) belongs to $\SStar_{f, hp}(R)$. Thus the semistar operation $$\star:=\bigvee(\{\bigwedge(W_F)\mid W_F\in\mathfrak{U}\})\in\SStar_{f, hp}(R)=X.$$ We claim that $\star\in X_{\mathcal{S}}(\mathfrak{U})$. Fix a finitely generated fractional ideal $F$ of $R$. It suffices to show $\star\in W_F\Leftrightarrow  W_F\in\mathfrak{U}$. First, assume $\star\in W_F$, i.e., $1\in F^{\star}$. By Lemma \ref{dd}, there exist finitely generated fractional ideals $F_1,\ldots,F_n$ of $R$ such that $1\in F^{\bigwedge(W_{F_1})\circ\cdots\circ\bigwedge(W_{F_n})}$ and $W_{F_i}\in \mathfrak{U}$, for any $i=1,\ldots,n$. Take a semistar operation $\sigma\in\cap_{i=1}^nW_{F_i}$. By definition, $\sigma\geq\bigwedge(W_{F_i})$, for $i=1,\ldots,n$, and thus $$1\in F^{\bigwedge(W_{F_1})\circ\cdots\circ\bigwedge(W_{F_n})}\subseteq F^{\sigma\circ\cdots\circ\sigma}=F^{\sigma},$$ thus $\sigma\in W_F$. This shows that $\cap_{i=1}^nW_{F_i}\subseteq W_F$ and thus, by definition of ultrafilter, $W_F\in\mathfrak{U}$, since $W_{F_1},\ldots,W_{F_n}\in\mathfrak{U}$. Conversely, assume that $W_F\in\mathfrak{U}$. This implies that $\bigwedge(W_F)\leq\star$. By definition, $1\in F^{\sigma}$, for each $\sigma\in W_F$, and thus $$1\in\cap_{\sigma\in W_F}F^{\sigma}=:F^{\bigwedge(W_F)}\subseteq F^{\star}.$$ This completes the proof.
\end{proof}

Given a spectral space $X$, the \emph{constructible topology} on $X$ is the coarsest topology such that every open and quasi-compact subset of $X$ (in the original topology) is both open and closed. By \cite{fl08}, the closed sets of the constructible topology in $X$ are the subsets $Y$ of $X$ such that, for every ultrafilter $\mathfrak{U}$ of $Y$, $$Y_{\beta}(\mathfrak{U}):=\{x\in X\mid[\text{for each }B\in\beta\text{, it happens that }x\in B\Leftrightarrow B\cap Y\in \mathfrak{U}]\}\subseteq Y,$$ where $\beta$ is the set of open and quasi-compact subspaces of $X$.

\begin{remark}\label{cons}{ The proof of Theorem \ref{spec} actually shows more than just the fact that $\SStar_{f, hp}(R)$ is a spectral space. In fact we also showed that  when $\SStar_{f}(R)$ is endowed with the constructible topology, $\SStar_{f, hp}(R)$ is a closed subspace.
}
\end{remark}

Let $\Spec(R)$ be the space of prime spectrum of $R$ endowed with Zariski topology, i.e. the topology whose closed sets are of the form $$V(\fa):=\{\fp\in\Spec(R)\mid \fp\supseteq\fa\},$$ for any ideal $\fa$ of $R$. The sets of the form $D(f):=\Spec(R)\setminus V(fR)$ form a basis of open sets for the Zariski topology. Denote by $\Spec_h(R)$ the subspace of $\Spec(R)$, of homogeneous prime ideals of $R$. We conclude the paper by showing that $\Spec_h(R)$ is a spectral space. For $a\in R$, let $V_h(a):=V(aR)\cap\Spec_h(R)$.

\begin{lemma}\label{clt} (cf. \cite[Lemma 2.4]{clt}) Let $Y$ be a subset of $X:=\Spec_h(R)$ and let $\mathfrak{U}$ be an ultrafilter on $Y$. Then $\fp:=\fp_{\mathfrak{U}}:=\{f\in H\mid V_h(f)\cap Y\in\mathfrak{U}\}$ is a homogeneous prime ideal of $R$.
\end{lemma}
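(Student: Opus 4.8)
The plan is to transcribe the ultrafilter argument behind the non-graded statement \cite[Lemma 2.4]{clt}, extending the defining condition from $H$ to all of $R$ and watching the grading carefully. I would put $\fp:=\{a\in R\mid V_h(a)\cap Y\in\mathfrak{U}\}$. Since every member of $X=\Spec_h(R)$ is a homogeneous prime, it contains an element $a$ if and only if it contains each of the (finitely many) homogeneous components $c$ of $a$; hence $V_h(a)=\bigcap_c V_h(c)$, so that $V_h(a)\cap Y$ is a finite intersection of the sets $V_h(c)\cap Y$. Because $\mathfrak{U}$ is a filter that is closed under finite intersections and supersets, this gives at once that $a\in\fp$ if and only if every homogeneous component of $a$ belongs to $\fp$; in particular $\fp=\bigoplus_\alpha(\fp\cap R_\alpha)$, and $\fp$ agrees with the set $\fp_{\mathfrak{U}}$ of the statement on homogeneous elements.

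Next I would list the elementary incidence facts on $\Spec_h(R)$ that convert the ideal and prime axioms into ultrafilter membership: $V_h(0)=X$ and $V_h(1)=\emptyset$; for all $a,b,r\in R$, $V_h(a)\cap V_h(b)\subseteq V_h(a+b)$ and $V_h(a)\subseteq V_h(ra)$ (the members of $X$ being ideals); and $V_h(fg)=V_h(f)\cup V_h(g)$ for homogeneous $f,g$, the nontrivial inclusion being the primeness of the members of $X$. From these: $0\in\fp$ since $V_h(0)\cap Y=Y\in\mathfrak{U}$; if $f,g\in\fp$ then $V_h(f)\cap V_h(g)\cap Y\in\mathfrak{U}$, hence $V_h(f+g)\cap Y\in\mathfrak{U}$ by upward closure, so $f+g\in\fp$; if $f\in\fp$ and $r\in R$ then $V_h(rf)\cap Y\supseteq V_h(f)\cap Y\in\mathfrak{U}$, so $rf\in\fp$. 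Thus $\fp$ is a homogeneous ideal. It is proper because an ultrafilter is proper and supported on a nonempty set, so $\emptyset\notin\mathfrak{U}$, whence $1\notin\fp$.

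For primeness I would use the standard fact that a proper homogeneous ideal of a graded domain (over a totally ordered grading monoid) is prime once it satisfies the prime condition on homogeneous elements. So take $f,g\in H$ with $fg\in\fp$. Then $(V_h(f)\cap Y)\cup(V_h(g)\cap Y)=V_h(fg)\cap Y\in\mathfrak{U}$, and since $\mathfrak{U}$ is an ultrafilter (equivalently a prime filter) one of the two sets already lies in $\mathfrak{U}$, i.e.\ $f\in\fp$ or $g\in\fp$.

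There is no serious obstacle here; the one point that needs care is the interaction between the grading and the ultrafilter, namely checking that restricting the defining condition to homogeneous elements still produces a \emph{homogeneous} ideal (this is exactly what the identity $V_h(a)=\bigcap_c V_h(c)$ buys us) and that it is legitimate to test primeness only on homogeneous elements. Everything else uses nothing beyond the facts that $\mathfrak{U}$ is a proper filter, closed under finite intersections and supersets, and prime.
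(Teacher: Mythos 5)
Your proposal is correct and follows essentially the same route as the paper: homogeneity via the containments between $V_h(a)$ and the $V_h$ of its homogeneous components together with upward closure of the filter, the homogeneous prime condition via $V_h(fg)=V_h(f)\cup V_h(g)$ and the prime-filter property of ultrafilters, and the reduction of primeness to homogeneous elements via the standard graded fact (the paper cites Northcott for this). The only difference is that you explicitly verify the ideal axioms (closure under sums, absorption, properness via $\emptyset\notin\mathfrak{U}$), which the paper leaves implicit; this is a welcome completion rather than a divergence.
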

\begin{proof} To prove that $\fp$ is homogeneous, assume that $a\in\fp$ and $a=\sum_{i=1}^n a_i$ is the decomposition of $a$ to homogeneous components. Then $V_h(a)\cap Y\in\mathfrak{U}$. Since $V_h(a)=V_h(\sum a_i)\subseteq V_h(a_i)$, we obtain that $V_h(a_i)\cap Y\in\mathfrak{U}$ for $i=1,\cdots,n$. Hence $a_i\in\fp$ for $i=1,\cdots,n$ and thus $\fp$ is a homogeneous ideals. Now let $fg\in\fp$ for $f,g\in H$. So that $(V_h(f)\cap Y)\cup(V_h(g)\cap Y)=V_h(fg)\cap Y\in\mathfrak{U}$. Then $V_h(f)\cap Y\in\mathfrak{U}$ or $V_h(g)\cap Y\in\mathfrak{U}$ by properties of ultrafilters. Thus $f\in\fp$ or $g\in\fp$. Therefore by \cite[Proposition 33, Page 124]{n68}, $\fp$ is a prime ideal of $R$.
\end{proof}

\begin{theorem}\label{s} The set $\Spec_h(R)$, endowed with the Zariski topology, is a spectral space.
\end{theorem}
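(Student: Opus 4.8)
The plan is to mimic the proof of Theorem~\ref{spec}, applying the ultrafilter criterion for spectral spaces of \cite[Corollary 3.3]{f14}: a space is spectral precisely when it is $T_0$ and admits a subbasis $\mathcal{S}$ of open sets with $X_{\mathcal{S}}(\mathfrak{U})\neq\emptyset$ for every ultrafilter $\mathfrak{U}$ on $X$. Write $X:=\Spec_h(R)$ and, for $f\in H$, put $D_h(f):=X\setminus V_h(f)$. As a subspace of the $T_0$-space $\Spec(R)$, $X$ is $T_0$.

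First I would verify that $\mathcal{S}:=\{D_h(f)\mid f\in H\}$ is a subbasis (in fact a basis, being closed under finite intersection since $D_h(f)\cap D_h(g)=D_h(fg)$) of the Zariski topology on $X$. The key observation is that a homogeneous prime contains an element $a=\sum_{i=1}^n a_i$ (decomposition into homogeneous components) if and only if it contains each $a_i$; hence $V_h(a)=\bigcap_{i=1}^n V_h(a_i)$, and therefore $D(a)\cap X=\bigcup_{i=1}^n D_h(a_i)$, so the basic open sets $D(a)\cap X$ of $X$ are finite unions of members of $\mathcal{S}$.

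Then, given an ultrafilter $\mathfrak{U}$ on $X$, I would invoke Lemma~\ref{clt} with $Y=X$ to obtain the homogeneous prime ideal $\fp:=\fp_{\mathfrak{U}}=\{f\in H\mid V_h(f)\in\mathfrak{U}\}$ of $R$, so that $\fp\in X$. The verification that $\fp\in X_{\mathcal{S}}(\mathfrak{U})$ is then immediate: for $f\in H$ we have $\fp\in D_h(f)$ iff $f\notin\fp$ iff $V_h(f)\notin\mathfrak{U}$, and by the defining property of an ultrafilter this last condition is equivalent to the complement $D_h(f)=X\setminus V_h(f)$ belonging to $\mathfrak{U}$. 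Thus $\fp\in D_h(f)\Leftrightarrow D_h(f)\in\mathfrak{U}$ for all $f\in H$, so $X_{\mathcal{S}}(\mathfrak{U})\neq\emptyset$, and \cite[Corollary 3.3]{f14} yields that $\Spec_h(R)$ is spectral.

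I do not anticipate a genuine obstacle: essentially all the work is done by Lemma~\ref{clt}, which manufactures the required homogeneous prime from the ultrafilter, together with Finocchiaro's criterion; the only points requiring care are checking that the sets $D_h(f)$, $f\in H$, really do form a subbasis for the topology on $\Spec_h(R)$, and the (routine) ultrafilter bookkeeping in the equivalence above. As in Remark~\ref{cons}, one may moreover observe that the argument actually shows $\Spec_h(R)$ is a closed subspace of $\Spec(R)$ endowed with the constructible topology.
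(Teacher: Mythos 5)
Your proposal is correct and follows essentially the same route as the paper: the ultrafilter criterion of \cite[Corollary 3.3]{f14} applied to the subbasis $\{D_h(f)\mid f\in H\}$, with Lemma~\ref{clt} producing the homogeneous prime $\fp_{\mathfrak{U}}$ that witnesses $X_{\mathcal{S}}(\mathfrak{U})\neq\emptyset$. The only difference is that you spell out why the sets $D_h(f)$ form a basis (via $V_h(a)=\bigcap_i V_h(a_i)$ for the homogeneous components $a_i$ of $a$), a point the paper leaves implicit.
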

\begin{proof} Let $X:=\Spec_h(R)$. We use \cite[Corollary 3.3]{f14} to prove that $X$ is a spectral space. Since $\Spec(R)$ is a $T_0$-space, we have $X$ is a $T_0$-space. The set $\mathcal{S}:=\{D_h(f):=\Spec_h(R)\setminus V(fR)\mid f\in H\}$ is a basis of open sets of $X$. Let $\mathfrak{U}$ be an ultrafilter on $X$. It suffices to show that the set $$X_{\mathcal{S}}(\mathfrak{U}):=\{\fp\in X\mid[\text{for each }D_h(f)\in\mathcal{S}\text{, it happens that } \fp\in D_h(f)\Leftrightarrow D_h(f)\in\mathfrak{U}]\}$$ is nonempty.

Let $\fp:=\{a\in H\mid V_h(a):=V(a)\cap\Spec_h(R)\in\mathfrak{U}\}$. Then $\fp$ is a homogeneous prime ideal of $R$ by Lemma \ref{clt}. We show that $\fp\in X_{\mathcal{S}}(\mathfrak{U})$. To this end we have to show that for each $f\in H$, $$\fp\in D_h(f)\Leftrightarrow D_h(f)\in\mathfrak{U}.$$ If $\fp\in D_h(f)$, then $\fp\notin V_h(f)$ and $f\notin\fp$. So that, $V_h(f)\notin\mathfrak{U}$, and hence $D_h(f)\in\mathfrak{U}$. Conversely, assume that $D_h(f)\in\mathfrak{U}$, Then $X\setminus V_h(f)\in\mathfrak{U}$, and so $V_h(f)\notin\mathfrak{U}$ (since otherwise $\emptyset\in\mathfrak{U}$ contradicts the definition of ultrafilters). Thus $f\notin\fp$ and hence $\fp\in D_h(f)$. This completes the proof.
\end{proof}

\begin{remark}{ Like Remark \ref{cons}, the proof of Theorem \ref{s} also shows that when $\Spec(R)$ is endowed with the constructible topology, $\Spec_h(R)$ is a closed subspace.
}
\end{remark}

\begin{center} {\bf ACKNOWLEDGMENT}

\end{center} I sincerely thank the referee for the careful reading of the manuscript, and several wonderful comments which greatly improved the paper and pointing me an error in an earlier version of this paper.


\end{document}